\documentclass[12pt]{amsart}  %[12pt] while developing, omitted for final
\usepackage[latin1]{inputenc}
\usepackage{amsmath} 
\usepackage{amsfonts}
\usepackage{amssymb}
\usepackage{stmaryrd}
\usepackage{latexsym} 
\usepackage{graphicx}
\usepackage{subfigure}
\usepackage{color}
\usepackage{hyperref}
\usepackage{verbatim}
\usepackage[all]{xy}
\usepackage{graphics}
\usepackage{pdfsync}
\usepackage{xcolor}
\usepackage{tikz}
\usepackage{bm}
\usetikzlibrary{arrows.meta}
\usetikzlibrary{shapes.geometric}
\usetikzlibrary{calc,math}
\usetikzlibrary{external}
\usetikzlibrary{decorations.markings}
\theoremstyle{definition}
\newtheorem{theorem}{Theorem}[section]
\newtheorem{proposition}[theorem]{Proposition}

\newtheorem{definition}[theorem]{Definition}
\newtheorem{example}[theorem]{Example}

\newtheorem{definition/theorem}[theorem]{Definition/Theorem}

\theoremstyle{remark}
\newtheorem{remark}[theorem]{Remark}

\numberwithin{equation}{section}
\newlength\cellsize 
\savebox2{%
\begin{picture}(15,15)
\put(0,0){\line(1,0){15}}
\put(0,0){\line(0,1){15}}
\put(15,0){\line(0,1){15}}
\put(0,15){\line(1,0){15}}
\end{picture}}
\newcommand\cellify[1]{\def\thearg{#1}\def\nothing{}%
\ifx\thearg\nothing
\vrule width0pt height\cellsize depth0pt\else
\hbox to 0pt{\usebox2\hss}\fi%
\vbox to 15\unitlength{
\vss
\hbox to 15\unitlength{\hss$#1$\hss}
\vss}}
\newcommand\tableau[1]{\vtop{\let\\=\cr
\setlength\baselineskip{-16000pt}
\setlength\lineskiplimit{16000pt}
\setlength\lineskip{0pt}
\halign{&\cellify{##}\cr#1\crcr}}}
\savebox3{%
\begin{picture}(15,15)
\put(0,0){\line(1,0){15}}
\put(0,0){\line(0,1){15}}
\put(15,0){\line(0,1){15}}
\put(0,15){\line(1,0){15}}
\end{picture}}
\newcommand\expath[1]{%
\hbox to 0pt{\usebox3\hss}%
\vbox to 15\unitlength{
\vss
\hbox to 15\unitlength{\hss$#1$\hss}
\vss}}
\newcommand\bas[1]{\omit \vbox to \cellsize{ \vss \hbox to \cellsize{\hss$#1$\hss} \vss}}

\DeclareMathOperator{\sign}{sign}
\DeclareMathOperator{\sort}{sort}
\DeclareMathOperator\type{type}

\DeclareMathOperator{\len}{\ell}

\DeclareMathOperator{\ctrip}{\mathcal{C}}
\DeclareMathOperator{\strip}{\mathcal{S}}
\DeclareMathOperator{\ST}{ST}
\DeclareMathOperator{\trace}{trace}
\DeclareMathOperator{\rev}{rev}

\NewDocumentCommand{\vertexs}{O{black} O{1cm} O{above} m O{(0,0)} m O{}}{
\path let \p1 = #6, \p2=#5 in node[style={draw,fill,color=#1,circle,minimum size=6pt,inner sep=0},label={[text=#1, label distance=#2]#3:}] (v#7#4) at (\x1+\x2,\y1+\y2){};}
\NewDocumentCommand{\vertex}{O{black} O{-3pt} O{above} m O{(0,0)} m O{}}{
\path let \p1 = #6, \p2=#5 in node[style={draw,fill,color=#1,circle,minimum size=6pt,inner sep=0},label={[text=#1, label distance=#2]#3:#4}] (v#7#4) at (\x1+\x2,\y1+\y2){};}
\NewDocumentCommand{\vertexl}{O{black} O{0pt} O{0pt} m O{(0,0)} m O{}}{
\path let \p1 = #6, \p2=#5 in node[style={draw,fill,color=#1,circle,minimum size=6pt,inner sep=0},label={[text=#1, shift={(#2, #3)}]#4}] (v#7#4) at (\x1+\x2,\y1+\y2){};}

\usepackage[backend=bibtex]{biblatex}
\begin{document}

\title[The Tutte symmetric matrix of a graph]{The Tutte symmetric matrix of a graph}

\author{Foster Tom}
\address{Department of Mathematics, Dartmouth College, Hanover NH 03755}
\email{foster.tom@dartmouth.edu}

\author{Aarush Vailaya}
\address{Department of Mathematics, Massachusetts Institute of Technology, Cambridge MA 02139}
\email{aarushv@mit.edu}

%05: Graph theory
%05C15:Coloring of graphs and hypergraphs
%05C50:Graphs and matrices
%05E05:Symmetric functions and generalizations
\subjclass[2020]{Primary 05C50; Secondary 05E05, 05C15}
\keywords{chromatic polynomial, chromatic quasisymmetric function, Tutte polynomial, Tutte symmetric function}

\begin{abstract}
We provide a matrix-based formula for the Tutte symmetric function of a graph. In particular, for any graph $G$ with a designated head and tail vertex, we describe an infinite matrix $M_G$ from which the Tutte symmetric function can be easily recovered. We prove gluing graphs together corresponds to matrix multiplication, gluing the head and tail of a single graph corresponds to taking the trace, and reversing a graph corresponds to the transpose (up to a change of basis).
\end{abstract}

\maketitle
%\tableofcontents
%\vspace{65pt}

\section{Introduction}
The Tutte symmetric function is a generalization of the well-studied Tutte polynomial and the chromatic symmetric function. The Tutte polynomial $T_G(x, y)$ is the most general graph function that satisfies the deletion-contraction relation. It generalizes the chromatic and flow polynomials, and has many connections to areas such as knot theory and quantum field theory \cite{tuttehandbook}. The chromatic symmetric function $X_G(\bm x)$ also generalizes the chromatic polynomial, encodes additional information about acyclic orientations, and has connections to Hessenberg varieties \cite{dothessenberg,chromquasihessenberg}, 
LLT polynomials \cite{lltchrom}, and Hecke algebras \cite{chromhecke}. Moreover, unlike the Tutte polynomial which is the same for trees with a given number of vertices, the chromatic symmetric function is conjectured to distinguish trees \cite{chromsym}. Another famous conjecture regarding the chromatic symmetric function was the Stanley--Stembridge conjecture, which claimed the chromatic symmetric function of unit interval graphs had non-negative coefficients in the $e$-basis, and was recently resolved by Hikita in \cite{stanstemproof}.

A lesser-studied generalization of both functions is the Tutte symmetric function $XB_G(\bm x; t)$ \cite{graphrelated}. Previous papers have studied this function to find pairs of graphs with equal Tutte and chromatic symmetric functions and to characterize the Tutte symmetric function as a general graph invariant satisfying properties similar to the familiar deletion-contraction relation \cite{tuttevtxweighted, tuttemodular}. Still, little is known about the structure or behavior of the Tutte symmetric function.

All graph functions mentioned above are multiplicative over connected components. Similarly, when joining two graphs at a single vertex, the Tutte polynomial is also multiplicative, but no analogous formula was known for the chromatic symmetric function. In \cite{gluesinglevertex}, the authors extend Hikita's recent proof of the Stanley--Stembridge conjecture to derive $e$-positive formulas for a varied family of graphs including certain proper circular-arc graphs.

This paper builds on \cite{gluesinglevertex}, which established a matrix-based formula for the chromatic symmetric function of graphs glued at a single vertex. We generalize this framework to the Tutte symmetric function by constructing an infinite matrix $M_G$ where gluing graphs at single vertices corresponds to matrix multiplication, identifying two vertices on the same graph corresponds to taking the trace of the matrix, and reversing a graph corresponds to taking the transpose (up to a change of basis).

This framework reveals that several natural properties of graphs correspond to fundamental properties of matrices. For instance, when three graphs $A$, $B$, $C$ are attached in a circle, the resulting graph $(A+B+C)^\circ$ is isomorphic to $(B+C+A)^\circ$ by the cyclic symmetry, but need not be $(A+C+B)^\circ$. This mirrors the fact that the trace is invariant under cyclic but not arbitrary reordering. Similarly, reversing a graph corresponds to finding a trace-preserving map $f$ satisfying $f(AB)=f(B)f(A)$, and the transpose (up to a change of basis) is the typical example.

More specifically, Section~\ref{section:background} provides a brief background of the Tutte symmetric function. In Section~\ref{section:tsm} we introduce the Tutte symmetric matrix and establish its basic properties, such as the multiplication property. Section~\ref{section:traceresult} shows that identifying two vertices of a graph is equivalent to taking the trace of the Tutte symmetric matrix, and Section~\ref{section:reverse} shows that reversing a graph is equivalent to transposing the matrix up to a change of basis.

\section{Tutte Symmetric Function}\label{section:background}
Graphs in this paper have vertex set $[n]=\{1,\ldots,n\}$ for some $n$. For graphs $G=([n],E)$ and $H=([n'],E')$ we define the \emph{sum}
\begin{equation*}
G+H=([n+n'-1],E\cup \{\{i+n-1,j+n-1\}:\{i,j\}\in E'\}).
\end{equation*} 
Informally, we glue vertex $n$ of $G$ to vertex $1$ of $H$.

\begin{figure}
$$\begin{tikzpicture}
\node at (-0.75,0) {$G=$};
% Triangle pointing right (|>)
\draw (0,-0.5)--(0,0.5)--({sqrt(3)/2},0)--(0,-0.5);
\filldraw (0,-0.5) circle (3pt) node[align=center,left] (1){1};
\filldraw (0,0.5) circle (3pt) node[align=center,left] (2){2};
\filldraw [color=red] ({sqrt(3)/2},0) circle (3pt) node[align=center,right] (3){3};
\node at (0.5,-2.25) {$\begin{aligned}
&XB_G(\bm x; t)=\\&\phantom{+} (t+1)^3e_{111}\\
&- 3t(t+1)^2e_{21}\\
&+ 3t^2(t+3)e_3
\end{aligned}$};

\node at (3,0) {$H=$};
% Path on 2 vertices (P2)
\draw (3.75,0)--(4.75,0);
\filldraw [color=red] (3.75,0) circle (3pt) node[align=center,below] (1){1};
\filldraw (4.75,0) circle (3pt) node[align=center,below] (2){2};
\node at (4.25,-1.9) {$\begin{aligned}
&XB_H(\bm x; t) =\\&\phantom{-}(t+1)e_{11}\\
&- 2te_2
\end{aligned}$};

\node at (7.5,0) {$G+H=$};
% G and H glued at vertex 3/1
\draw (8.75,-0.5)--(8.75,0.5)--({8.75+sqrt(3)/2},0)--(8.75,-0.5);
\draw ({8.75+sqrt(3)/2},0)--({9.75+sqrt(3)/2},0);
\filldraw (8.75,-0.5) circle (3pt) node[align=center,left] (1){1};
\filldraw (8.75,0.5) circle (3pt) node[align=center,left] (2){2};
\filldraw [color=red] ({8.75+sqrt(3)/2},0) circle (3pt) node[align=center,below] (3){3};
\filldraw ({9.75+sqrt(3)/2},0) circle (3pt) node[align=center,below] (4){4};
\node at (10.25,-2.6) {$\begin{aligned}
XB_{G+H}(\bm x; t) &= (t+1)^4 e_{1111}\\
&- t(t+1)(4t^2+11t+8)e_{211}\\
&+ 2t^2(t+1)(t+2)e_{22}\\
&+ t^2(4t^2+15t+15)e_{31}\\
&- 4t^3(t+3)e_4
\end{aligned}$};
\end{tikzpicture}$$
\caption{\label{fig:glue} Graphs $G$ and $H$ glued at a single vertex.}
\end{figure}

\subsection{Tutte symmetric function}
A \emph{coloring} of $G=([n], E)$ is a function $\kappa:[n] \to \mathbb{P}=\{1,2,3,\ldots\}$. A coloring $\kappa$ is a \emph{proper coloring} if $\kappa(u)\neq \kappa(v)$ whenever $\{u,v\}\in E$. The \emph{chromatic symmetric function} of $G$ is \cite[Definition~2.1]{chromsym}
\begin{equation*}
X_G(\bm x)=\sum_{\kappa\text{ proper coloring }}x_{\kappa(1)}\cdots x_{\kappa(n)}.
\end{equation*}
A generalization is the \emph{Tutte symmetric function} of $G$, defined as \cite[Definition~3.1]{graphrelated}
\begin{equation}
    XB_G(\boldsymbol x; t)=\sum_{\kappa\text{ coloring}}(1+t)^{e(\kappa)}x_{\kappa(1)}\cdots x_{\kappa(n)},
\end{equation}
where we sum over all colorings $\kappa: [n]\rightarrow \mathbb{P}$, letting $e(\kappa)$ be the number of edges $\{u, v\}$ where $\kappa(u)=\kappa(v)$. Note $XB_G(\bm x; -1)=X_G(\bm x)$.

We will write the Tutte symmetric function in the \emph{$e$-basis}. Given a partition $\lambda=(\lambda_1,\ldots, \lambda_\ell)$ of decreasing positive integers, define
\[e_\lambda = e_{\lambda_1}\cdots e_{\lambda_\ell}, \text{ where } e_{n}=\sum_{i_1<\cdots < i_n \in \mathbb{P}}x_{i_1}\cdots x_{i_n}.\]
For brevity, we use juxtaposition to denote $\lambda$ consisting of single digits, meaning $e_{421}=e_{(4,2,1)}$.
\begin{example}
    Figure~\ref{fig:glue} shows an example of gluing graphs together. To calculate $XB_H(\bm x; t)$ for $H=P_2$, we either color vertices 1 and 2 with different colors or the same color, getting
    \[XB_{H}(\bm x; t) = 2\cdot\sum_{i_1<i_2\in\mathbb{P}}x_{i_1}x_{i_2}+\sum_{i\in\mathbb{P}}(1+t)x_i^2=2e_2+(1+t)(e_{11}-2e_2),\]
    which simplifies to the value in the figure.
\end{example}

\subsection{Subgraph triples}

We will now provide a signed combinatorial formula for the $e$-expansion of $XB_G(\boldsymbol x)$ using subgraph triples, similar to the definition in \cite[Definition~5.12]{gluesinglevertex}.

\begin{definition}
A \emph{component triple of $G$} is an object $\mathcal C=(C,\alpha,r)$ consisting of the following data.
\begin{itemize}
\item $C$ is a connected subgraph of $G$.
\item $\alpha$ is a composition of size $|C|$.
\item $r$ is an integer with $1\leq r\leq\alpha_1$, the first part of $\alpha$.
\end{itemize}
A \emph{subgraph triple of $G$} is a sequence $\mathcal S=(\mathcal C_1=(C_1,\alpha^{(1)},r_1),\ldots,\mathcal C_m=(C_m,\alpha^{(m)},r_m))$ of component triples where $S=C_1\sqcup\cdots\sqcup C_m$ is a spanning subgraph of $G$. Let $\text{ST}(G)$ denote the set of subgraph triples of $G$. The \emph{type} of $\mathcal S$ is the partition
\begin{equation*}
\text{type}(\mathcal S)=\text{sort}(\alpha^{(1)}\cdots\alpha^{(m)}),
\end{equation*}
given by concatenating the compositions and sorting to make a partition. The \emph{sign} of $\mathcal S$ is the integer
\begin{equation*}
\text{sign}(\mathcal S)=(-1)^{n-\sum_{i=1}^m(\ell(\alpha^{(i)}))}=(-1)^{n-\ell(\text{type}(\mathcal S))}.
\end{equation*}
We will often be interested in $\alpha^{(1)}_1$, the first part of the composition associated to the component containing vertex $1$. We define the \emph{reduced type} of $\mathcal S$, denoted $\text{type}'(\mathcal S)$, to be the partition $\text{type}(\mathcal S)$ with an instance of $\alpha^{(1)}_1$ removed, meaning
\[\type'(\strip)=\sort(\alpha^{(1)} \setminus \alpha_1^{(1)}\cdot \alpha^{(2)}\cdots \alpha^{(m)}).\]
\end{definition}
Now we can use subgraph triples to calculate the Tutte symmetric function.

\begin{theorem}
The Tutte symmetric function of $G$ satisfies
\begin{equation*}
XB_G(\bm x; t)=\sum_{\mathcal S\in\text{ST}(G)}\text{sign}(\mathcal S) \cdot t^{E(\strip)} \cdot e_{\text{type}(\mathcal S)},
\end{equation*}
where $E(\strip)=|E(S)|$ counts the number of edges in all components of $\strip$.
\end{theorem}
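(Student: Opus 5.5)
The plan is to expand the weight $(1+t)^{e(\kappa)}$ over subsets of monochromatic edges, group colorings by the resulting spanning subgraph, and then expand each connected component's contribution in the $e$-basis.

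\textbf{Step 1: subgraph expansion.} Write $(1+t)^{e(\kappa)}=\sum_{F}t^{|F|}$, where $F$ ranges over subsets of the edges that are monochromatic under $\kappa$. Exchanging the order of summation gives
\begin{equation*}
XB_G(\bm x;t)=\sum_{F\subseteq E}t^{|F|}\sum_{\kappa\text{ constant on components of }(V,F)}x_{\kappa(1)}\cdots x_{\kappa(n)}=\sum_{F\subseteq E}t^{|F|}\prod_{C}p_{|C|},
\end{equation*}
where $C$ runs over the connected components of the spanning subgraph $S=([n],F)$. This is the standard $p$-expansion. A spanning subgraph is determined by its edge set, and its components are exactly the connected subgraphs $C_1\sqcup\cdots\sqcup C_m$. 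So $t^{|F|}=t^{E(\mathcal S)}$, and it remains to expand each $p_{|C|}$.

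\textbf{Step 2: $e$-expansion of power sums.} I would use the classical identity
\begin{equation*}
p_k=\sum_{\alpha\models k}(-1)^{k-\ell(\alpha)}\,\alpha_1\, e_{\alpha},
\end{equation*}
summing over compositions $\alpha$ of $k$, with $e_\alpha=e_{\alpha_1}\cdots e_{\alpha_\ell}$. The factor $\alpha_1$ is exactly the number of choices of $r$ with $1\le r\le\alpha_1$. So $p_{|C|}=\sum_{(C,\alpha,r)}(-1)^{|C|-\ell(\alpha)}e_\alpha$, summed over component triples with fixed $C$. To justify the identity, I would start from Newton's identity $p_k=\sum_{i=1}^{k-1}(-1)^{i-1}e_ip_{k-i}+(-1)^{k-1}ke_k$ and use induction on $k$. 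Alternatively, take the logarithmic derivative of $E(z)=\sum e_iz^i$: this gives $P(-z)=-zE'(z)/E(z)$ as generating functions. Expanding $1/E(z)$ as a geometric series over compositions, the derivative factor supplies the weight on the first part. This identity is the main (though classical) technical point. The care needed is in checking that the sign works out to $(-1)^{k-\ell(\alpha)}$ and that the weight falls on the first part $\alpha_1$ rather than some other part. Commutativity makes which part is weighted immaterial anyway: summing over compositions, the weight can be moved to any position by symmetry.

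\textbf{Step 3: assembly.} Multiply the expansions over the components. The product of $e_{\alpha^{(i)}}$ is $e_{\type(\mathcal S)}$. The signs multiply to $(-1)^{\sum_i(|C_i|-\ell(\alpha^{(i)}))}=(-1)^{n-\sum_i\ell(\alpha^{(i)})}=\sign(\mathcal S)$, since the sizes $|C_i|$ sum to $n$. Finally, I need the sequence of component triples to correspond to an unordered set of components. For this I would fix a canonical ordering, for instance by smallest vertex, so that component $1$ contains vertex $1$, consistent with the reduced type. This yields the theorem.
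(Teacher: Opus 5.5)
Your proposal is correct and takes essentially the same route as the paper. Both arguments use the edge-subset $p$-expansion of $XB_G$, then the change of basis $p_k=\sum_{\alpha\models k}(-1)^{k-\ell(\alpha)}\alpha_1 e_{\alpha}$ with the factor $\alpha_1$ absorbed by the choice of $r$, and then multiply over components. The differences are that you derive the $p$-expansion and the change-of-basis identity, which the paper only cites, and that you make explicit the canonical ordering of component triples, which the paper's ``sequence'' definition leaves implicit.
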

\begin{proof}
The Tutte symmetric function can be calculated as \cite[Theorem~3.2]{graphrelated}
\begin{equation*}
XB_G(\bm x; t) = \sum_{S\subseteq E(G)}t^{|S|}p_{\lambda(S)},
\end{equation*}
where $\lambda(S)$ is the partition formed by the sizes of the connected components in $S$. Using the change of basis formula
\[p_{n}=\sum_{\alpha \models n}(-1)^{n-\ell(\alpha)}\alpha_1e_{\sort(\alpha)},\]
we can rewrite the expression with a product over all connected components $C\in S$:
\begin{align*}
    XB_G(\bm x; t) &= \sum_{S\subseteq E(G)}t^{|S|}\prod_{C\in S}\sum_{\alpha \models |C|}(-1)^{|C|-\ell(\alpha)}\alpha_1 e_{\sort(\alpha)}\\
    &= \sum_{\substack{S\subseteq E(G)\\S=C_1\cup\cdots\cup C_m}}\; \sum_{\substack{\alpha^{(i)}\models |C_i|\\r_i\in\{1,\ldots, \alpha_1^{(i)}\}}}t^{|S|}(-1)^{n-\ell(\alpha^{(1)})-\cdots-\ell(\alpha^{(m)})}e_{\sort(\alpha^{(1)})}\cdots e_{\sort(\alpha^{(m)})}\\
    &=\sum_{\strip\in\ST(G)}\sign(\strip)\cdot t^{E(\strip)}\cdot e_{\type(\strip)}.
\end{align*}
\end{proof}

\begin{figure}
\begin{tikzpicture}
\tikzmath{\h1=0;\v1=0;\d1=3;};
\node[align=center] at ({\h1-2}, {\v1-1.25}) {$XB_{P_2}(\bm x; t)$};
\node[align=center] at ({\h1-0.4}, {\v1-1.25}) {$=$};
\vertex{1}[(\h1,\v1)]{(0,0)};
\vertex{2}[(\h1,\v1)]{(1,0)};
\node[font=\tiny,align=center] at ({\h1-0.2}, {\v1-0.5}) {$\alpha=(1)$ \\ $r=1$};
\node[font=\tiny,align=center] at ({\h1+1.2}, {\v1-0.5}) {$\alpha=(1)$ \\ $r=1$};
\node[align=center] at ({\h1+0.5}, {\v1-1.25}) {$e_{11}$};
\node[align=center] at ({\h1+0.5+(\d1/2)}, {\v1-1.25}) {$+$};
\tikzmath{\h1=\h1+\d1;};
\vertex{1}[(\h1,\v1)]{(0,0)};
\vertex{2}[(\h1,\v1)]{(1,0)};
\node[font=\tiny,align=center] at ({\h1+0.5}, {\v1-0.5}) {$ \alpha=(1,1)$ \\ $r=1$};
\node[align=center] at ({\h1+0.5}, {\v1-1.25}) {$te_{11}$};
\node[align=center] at ({\h1+0.5+(\d1/2)}, {\v1-1.25}) {$+$};
\draw (v1)--(v2);
\tikzmath{\h1=\h1+\d1;};
\vertex{1}[(\h1,\v1)]{(0,0)};
\vertex{2}[(\h1,\v1)]{(1,0)};
\node[font=\tiny,align=center] at ({\h1+0.5}, {\v1-0.5}) {$ \alpha=(2)$ \\ $r=1$};
\node[align=center] at ({\h1+0.5}, {\v1-1.25}) {$-te_{2}$};
\node[align=center] at ({\h1+0.5+(\d1/2)}, {\v1-1.25}) {$+$};
\draw (v1)--(v2);
\tikzmath{\h1=\h1+\d1;};
\vertex{1}[(\h1,\v1)]{(0,0)};
\vertex{2}[(\h1,\v1)]{(1,0)};
\draw (v1)--(v2);
\node[font=\tiny,align=center] at ({\h1+0.5}, {\v1-0.5}) {$ \alpha=(2)$ \\ $r=2$};
\node[align=center] at ({\h1+0.5}, {\v1-1.25}) {$-te_{2}$};
\end{tikzpicture}

\caption{\label{fig:p2_triple} The subgraph triples used to calculate $XB_{P_2}(\bm x; t)$.}
\end{figure}
\begin{example}
    Figure~\ref{fig:p2_triple} shows how to calculate $XB_{P_2}(\bm x; t)$ using subgraph triples.
\end{example}

\section{Tutte Symmetric Matrix}\label{section:tsm}
In \cite[Section~3]{gluesinglevertex}, the authors defined a matrix which allowed $X_{G+H}(\bm x)$ to be calculated from information about $X_G(\bm x)$ and $X_H(\bm x)$. In this section, we generalize this to Tutte symmetric functions.

\begin{definition}\label{def:fmat:fti}
Let $G=([n],E)$ be a graph and consider the subset of $\text{ST}(G+P_j)$
\begin{equation*}
\text{ST}^{(i)}(G+P_j)=\{\strip\in\text{ST}(G+P_j): \ \alpha_1=i, \ r=1, \ \{n,n+1,\ldots,n+j-1\}\subseteq C', \ \alpha'_\ell\geq j\},
\end{equation*}
where $\ctrip=(C,\alpha,r)$ is the component triple of $\strip$ with $1\in C$ and $\ctrip'=(C',\alpha',r')$ is the component triple of $\strip$ with $n\in C'$. In other words, the component containing vertex $1$ must have $r=1$ and a composition with first part $i$, and the component containing vertex $n$ must contain the entire path $P_j$ and its composition must have last part at least $j$. Note that we could have $\ctrip=\ctrip'$. \end{definition}
\begin{definition}\label{def:ftm:Fgij} We define the infinite matrix $M_G$ by
\begin{equation*}
(M_G)_{i,j}=\sum_{\mathcal S\in\text{ST}^{(i)}(G+P_j)}\text{sign}(\strip)\cdot (-1)^{1-j} \cdot t^{E(\strip)+1-j} \cdot e_{\text{type}'(\strip)}.
\end{equation*}

\end{definition}
Although $M_G$ is an infinite matrix, because $G+P_j$ has exactly $n+j-1$ vertices, every column contains a finite number of non-zero entries.\begin{proposition}\label{prop:ftmatrixzeroes}
Let $G=([n],E)$ be a graph. If $i\geq n+j$, or if $j\geq i\geq n\geq 2$, then $\text{ST}^{(i)}(G+P_j)$ is empty, so $(M_G)_{i,j}=0$.
\end{proposition}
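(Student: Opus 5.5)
Both claims follow by counting vertices in the components $C$ (containing vertex $1$) and $C'$ (containing vertex $n$). Take any $\strip\in\ST^{(i)}(G+P_j)$.

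\emph{First case: $i\geq n+j$.} The composition $\alpha$ attached to $C$ has size $|C|$, and its first part is $\alpha_1=i$. Hence $i\leq |C|\leq |V(G+P_j)|=n+j-1$, which contradicts $i\geq n+j$. So $\ST^{(i)}(G+P_j)$ is empty.

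\emph{Second case: $j\geq i\geq n\geq 2$.} I split according to whether $C=C'$.

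\begin{itemize}
\item Suppose $C=C'$, so $\alpha=\alpha'$. The first part of $\alpha$ is $\alpha_1=i$, and the last part satisfies $\alpha'_\ell\geq j$.
  \begin{itemize}
  \item If $\ell(\alpha)\geq 2$, these are distinct parts, so $|C|\geq i+j\geq n+j>n+j-1$. This is impossible.
  \item If $\ell(\alpha)=1$, then $i=\alpha_1=\alpha'_\ell\geq j$. Combined with $j\geq i$ this forces $i=j$, and then $|C|=i=j$. But $C$ contains vertex $1$ together with the $j$ path vertices $n,\ldots,n+j-1$. Since $n\geq 2$, vertex $1$ is not among them, so $|C|\geq j+1$. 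This is a contradiction.
  \end{itemize}
\item Suppose $C\neq C'$. These components are disjoint.
  \begin{itemize}
  \item $C'$ contains $n,\ldots,n+j-1$, so $C\subseteq [n-1]$ and $|C|\leq n-1$.
  \item On the other hand $|C|\geq\alpha_1=i\geq n$. This is a contradiction.
  \end{itemize}
\end{itemize}

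In every case the set $\ST^{(i)}(G+P_j)$ is empty, so the sum in Definition~\ref{def:ftm:Fgij} is empty and $(M_G)_{i,j}=0$.

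The only subtle step is the single-part subcase $\ell(\alpha)=1$. This is exactly where the hypothesis $n\geq 2$ is needed, to ensure vertex $1$ lies outside the path $P_j$. When $n=1$, vertex $1$ is the start of the path and the case $i=j$ genuinely occurs.
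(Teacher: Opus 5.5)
Your proof is correct and uses the same vertex-counting argument as the paper. The first part $\alpha_1=i$ bounds $|C|$, which forces $C=C'$ in the second case, and then $\alpha_1+\alpha_\ell\geq i+j>n+j-1$. Your separate handling of the single-part case $\ell(\alpha)=1$ is an improvement: there $\alpha_1$ and $\alpha_\ell$ are the same part, so the paper's bound $|\alpha|\geq i+j$ does not apply. You also correctly identify that subcase as the only place where the hypothesis $n\geq 2$ is needed.
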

\begin{proof}
    If $i \geq n + j$, then $\strip\in \ST^{(i)}(G+P_j)$ has component triple $\ctrip$ with $\alpha_1\geq n+j > |G+P_j|$, which is impossible since the sum of the sizes of the compositions equals $|G+P_j|$. If $j\geq i \geq n \geq 2$, then $\strip\in\ST^{(i)}(G+P_j)$ has $\ctrip$ with $\alpha_1\geq n$. This means vertices 1 and $n$ must be in the same component (so that $\ctrip$ includes all $j$ vertices of $P_j$), meaning $\ctrip = \ctrip'$ and $\alpha_\ell \geq j$. Then, the size of this composition is at least $i+j > n + j - 1 = |G+P_j|$, a contradiction.
\end{proof}
\begin{proposition}\label{prop:xmatrix}
Given $M_G$, we can recover $XB_G(\bm x; t)$ as
\begin{equation}\label{eq:xmatrix}
XB_G(\bm x; t)=\vec v M_G\vec w^T,
\end{equation}
where $\vec{v}, \vec{w}$ are the infinite row vectors \begin{equation*}\vec v=\left[\begin{matrix}e_1&2e_2&3e_3&\cdots\end{matrix}\right]\text{ and }\vec w=\left[\begin{matrix}1&0&0&\cdots\end{matrix}\right].\end{equation*} 
Note that because the columns of $M_G$ have finitely many nonzero entries by Proposition~\ref{prop:ftmatrixzeroes}, the matrix multiplications are valid.
\end{proposition}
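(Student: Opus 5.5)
Since $\vec w=[1,0,0,\ldots]$, the product $\vec v M_G\vec w^T$ reduces to $\sum_{i\ge1} i e_i (M_G)_{i,1}$. For $j=1$ the graph $G+P_1$ is just $G$. The condition $\{n\}\subseteq C'$ is automatic, and $\alpha'_\ell\ge 1$ always holds. The factors $(-1)^{1-j}$ and $t^{1-j}$ are both $1$. Hence
\begin{equation*}
(M_G)_{i,1}=\sum_{\substack{\strip\in\ST(G)\\ \alpha^{(1)}_1=i,\ r_1=1}}\sign(\strip)\, t^{E(\strip)}\, e_{\type'(\strip)},
\end{equation*}
where $(C_1,\alpha^{(1)},r_1)$ is the component triple containing vertex $1$. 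This sum is finite, since $i\le n$ by Proposition~\ref{prop:ftmatrixzeroes}. The plan is to compare $\sum_i i e_i (M_G)_{i,1}$ term by term with the expansion of $XB_G(\bm x;t)$ given by the subgraph-triple theorem.

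First, multiplying by $e_i$ with $i=\alpha^{(1)}_1$ restores the removed part. By definition of the reduced type, $e_i\, e_{\type'(\strip)}=e_{\type(\strip)}$.

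Second, I account for the factor $i$ and the constraint $r_1=1$. Let $\pi$ be the map on $\ST(G)$ that resets the integer $r$ of the component containing vertex $1$ to $1$ and leaves everything else unchanged. Each fiber of $\pi$ over a triple with $\alpha^{(1)}_1=i$ and $r_1=1$ has exactly $i$ elements, namely $r_1\in\{1,\ldots,i\}$. The statistics $\sign$, $E(\cdot)$ and $\type$ do not depend on $r$. Therefore
\begin{equation*}
\sum_i i e_i (M_G)_{i,1}=\sum_{\strip\in\ST(G)}\sign(\strip)\,t^{E(\strip)}\,e_{\type(\strip)}=XB_G(\bm x;t).
\end{equation*}

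One point needs care: subgraph triples are sequences, so the component "containing vertex $1$" must be well defined. It is, because the components are disjoint and span $G$. The reindexing therefore only touches that single component triple.

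There is no real obstacle here. The only point requiring attention is verifying that all the $j$-dependent decorations in the definition of $M_G$ trivialize at $j=1$. The validity of the infinite product is already settled by Proposition~\ref{prop:ftmatrixzeroes}.
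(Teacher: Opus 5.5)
Your proposal is correct and follows the same argument as the paper. The paper also reduces $\vec v M_G\vec w^T$ to $\sum_i ie_i(M_G)_{i,1}$, reads $\ST^{(i)}(G+P_1)$ as the subgraph triples of $G$ with $\alpha^{(1)}_1=i$ and $r_1=1$, uses $e_ie_{\type'(\strip)}=e_{\type(\strip)}$, and lets the factor $i$ account for the $i$ choices of $r_1$. You just spell out the steps the paper leaves implicit: that the $j$-dependent conditions and factors become trivial at $j=1$, and the fiber count for resetting $r_1$.
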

\begin{proof}
We have
\begin{equation*}
XB_G(\bm x; t)=\sum_{i\geq 1}ie_i\left(\sum_{\strip\in\text{ST}^{(i)}(G+P_1)}\text{sign}(\strip)\cdot t^{E(\strip)}\cdot e_{\text{type}'(\strip)}\right)=\sum_{i\geq 1}ie_i(M_G)_{i,1}=\vec{v}M_G\vec w^T,
\end{equation*}
where the factor of $i$ in $ie_i$ accounts for the possible choices of $1\leq r_i\leq\alpha^{(1)}_1=i$.
\end{proof}

We now calculate a few examples of $M_G$.

\begin{proposition}\label{prop:ftp1}
    For the graph $P_1$ with one vertex, $M_{P_1}$ is the infinite identity matrix $I$.
\end{proposition}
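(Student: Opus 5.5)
Here $n=1$, so $P_1+P_j=P_j$ with vertices $1,\ldots,j$. Vertex $n=1$ plays both roles, so in Definition~\ref{def:fmat:fti} we have $\ctrip=\ctrip'$, and this component must contain all of $P_j$. We compute $(M_{P_1})_{i,j}$ directly from Definition~\ref{def:ftm:Fgij} by enumerating $\ST^{(i)}(P_j)$.

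First, the spanning subgraph is forced. Every triple $\strip\in\ST^{(i)}(P_j)$ has a single component, equal to the whole path with all $j-1$ edges, so $E(\strip)=j-1$. The composition $\alpha$ of this component has size $j$, first part $\alpha_1=i$ and last part $\alpha_\ell\ge j$, and $r=1$.

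Next we split into cases on the length of $\alpha$.
\begin{itemize}
\item If $\ell(\alpha)\ge 2$, then $|\alpha|\ge \alpha_1+\alpha_\ell\ge 1+j>j$, which is impossible.
\item So $\alpha=(j)$, which forces $i=j$.
\end{itemize}
Hence $\ST^{(i)}(P_j)$ is empty when $i\ne j$, giving $(M_{P_1})_{i,j}=0$. This also agrees with Proposition~\ref{prop:ftmatrixzeroes} for $i\ge 1+j$. When $i=j$ there is exactly one triple: the full path with $\alpha=(j)$ and $r=1$.

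Finally we evaluate the single diagonal term. Its sign is $(-1)^{j-1}$, multiplied by the factor $(-1)^{1-j}$, giving $+1$. The power of $t$ is $t^{E(\strip)+1-j}=t^{0}=1$. The reduced type removes the single part $j$ from $(j)$, leaving the empty partition, so $e_{\type'(\strip)}=e_\emptyset=1$. Thus $(M_{P_1})_{j,j}=1$, and $M_{P_1}=I$.

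The only delicate point is confirming that $\ctrip=\ctrip'$ is forced when $n=1$, so that the last-part condition $\alpha'_\ell\ge j$ applies to the same composition whose first part is $i$. That is exactly what kills the compositions of length at least two; the remaining steps are routine bookkeeping.
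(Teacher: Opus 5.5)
Your proof is correct and follows the same route as the paper. With $n=1$, the component containing vertex $1$ must be all of $P_j$, the conditions force $\alpha=(j)$ and hence $i=j$, and the single remaining triple contributes $(-1)^{j-1}(-1)^{1-j}t^{0}\cdot 1=1$; your explicit check that $\ell(\alpha)\ge 2$ would give $|\alpha|\ge j+1$ spells out a step the paper leaves implicit.
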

\begin{proof}
    We are considering subgraph triples of $P_1+P_j=P_j$. Since $n=1$, then $\ctrip=\ctrip'$, meaning there is only 1 subgraph triple $\strip$ which resides in $\ST^{(j)}(G+P_j)$ and consists of the single component triple $\ctrip=(P_j,(j),1)$. When $i\neq j$, since $\ST^{(i)}(G+P_j)=\emptyset$ then $(M_{P_1})_{i,j}=0$. Similarly,
    \[(M_{P_1})_{j,j} = (-1)^{j-1}\cdot (-1)^{1-j}\cdot t^{j-1+1-j}\cdot 1 = 1,\]
    so $M_{P_1}$ is the infinite identity matrix.
\end{proof}

\begin{proposition}\label{prop:ftp2}
For the two-vertex path $P_2$, we have
\begin{equation*}
(M_{P_2})_{i,j}=\begin{cases} (t+j)e_j&\text{ if }i=1,\\
-t&\text{ if }i=j+1,\\
0&\text{ otherwise}.\end{cases}
\hspace{20pt}
M_{P_2}=\left[\begin{matrix}
(t+1)e_1&(t+2)e_2&(t+3)e_3&\cdots\\
-t&0&0&\cdots\\
0&-t&0&\cdots\\
0&0&-t&\cdots\\
%0&0&0&0&\cdots\\
\vdots&\vdots&\vdots&\ddots\\
\end{matrix}\right].
\end{equation*}
\end{proposition}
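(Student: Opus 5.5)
We compute $(M_{P_2})_{i,j}$ directly from Definition~\ref{def:ftm:Fgij} by enumerating $\ST^{(i)}(P_2+P_j)$. Here $n=2$ and $P_2+P_j=P_{j+1}$ on vertices $1,2,\ldots,j+1$. The definition requires the component $C'$ containing vertex $2$ to contain the whole path on $\{2,\ldots,j+1\}$. So the only freedom in the spanning subgraph is whether the edge $\{1,2\}$ is present. We split into these two cases and, in each, list the admissible compositions.

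\emph{Case 1: the edge $\{1,2\}$ is absent.} Then $C=\{1\}$ with $\alpha=(1)$ and $r=1$, so $i=1$ is forced. Also $C'$ is the path on $\{2,\ldots,j+1\}$, which has $j$ vertices and $j-1$ edges, and $\alpha'$ must have last part at least $j$. This forces $\alpha'=(j)$, while $r'$ ranges over $1,\ldots,j$, giving $j$ triples. Each has type $(1,j)$, reduced type $(j)$, sign $(-1)^{(j+1)-2}$, and $E=j-1$. Multiplying by $(-1)^{1-j}t^{1-j}$, each contributes $(-1)^{j-1}(-1)^{1-j}t^{0}e_j=e_j$. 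The total for this case is $je_j$, occurring only when $i=1$.

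\emph{Case 2: the edge $\{1,2\}$ is present.} Now $C=C'=P_{j+1}$ with $j$ edges and $r=1$. The composition $\alpha$ of $j+1$ needs $\alpha_1=i$ and last part at least $j$. There are two possibilities:
\begin{itemize}
\item The single-part composition $\alpha=(j+1)$, which requires $i=j+1$. Its sign is $(-1)^{j}$ and its reduced type is empty. The contribution is $(-1)^{j}(-1)^{1-j}t^{j+1-j}=-t$.
\item A two-part composition $\alpha=(i,j+1-i)$ with $j+1-i\ge j$. This forces $i=1$ and $\alpha=(1,j)$. Its sign is $(-1)^{j-1}$ and its reduced type is $(j)$. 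The contribution is $(-1)^{j-1}(-1)^{1-j}te_j=te_j$.
\end{itemize}
Compositions with three or more parts are impossible, since $\alpha_1\ge1$ and the last part is at least $j$, leaving room for at most two parts summing to $j+1$.

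Combining the cases: when $i=1$ we get $je_j+te_j=(t+j)e_j$; when $i=j+1$ we get $-t$; for every other $i$ the set is empty and the entry is $0$. The two cases $i=1$ and $i=j+1$ never coincide, since $j\ge1$ gives $j+1\ge2$.

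The main care point is the sign and exponent bookkeeping. The normalization $(-1)^{1-j}t^{1-j}$ must exactly cancel the path's $j-1$ edges and the sign $(-1)^{n-\ell}$. It is also important to count the $r'$ choices only when $C'\ne C$, because $r$ is fixed to $1$ when $C'=C$.
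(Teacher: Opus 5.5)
Your proof is correct and follows the paper's own argument: split on whether vertices $1$ and $2$ lie in the same component of $P_{j+1}$. In the first case the path condition forces $\alpha=(1)$ and $\alpha'=(j)$, with $j$ choices of $r'$; in the second it forces $\alpha\in\{(j+1),(1,j)\}$ with $r=1$. You also carry out the sign and $t$-exponent bookkeeping that the paper leaves implicit, and it checks out.
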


\begin{proof}
We are considering subgraph triples of $P_2+P_j=P_{j+1}$, shown in Figure~\ref{fig:ftp2example}. If the vertices $1$ and $2$ are in different component triples $\ctrip=(C,\alpha,r)$ and $\ctrip'=(C',\alpha',r')$, then we must have $\alpha=(1)$ and $r=1$. Because $\alpha'_\ell\geq j$, we must have $\alpha'=(j)$ and there are $j$ possibilities corresponding to the choices of $1\leq r'\leq j$. If the vertices $1$ and $2$ are in the same component triple $\ctrip=(C,\alpha,r)$, then $C$ must be the entire path $P_{j+1}$. Because $\alpha_\ell\geq j$, the only possibilities are $\alpha=(j+1)$ and $\alpha=(1,j)$, and we must have $r=1$.
\end{proof}
\begin{figure}
$$\begin{tikzpicture}
\node at (7.9,-1) [color=teal] {$\alpha=j+1$};
\node at (7.7,-2) [color=teal] {$\alpha=1 \ j$};
\node at (7.5,0) [color=magenta] {$\alpha'=j$};
\node at (9.5,0) [color=magenta] {$r'=1,\ldots,j$};
\node at (-1,0) [color=cyan] {$\alpha=1$};
\node at (12,-1) {$-t$};
\node at (12,-2) {$te_j$};
\node at (12,0) {$je_j$};
\filldraw [color=teal](0,-1) circle (3pt) node[align=center,below,color=teal] (1){1};
\filldraw [color=teal](1,-1) circle (3pt) node[align=center,below,color=teal] (2){2};
\filldraw [color=teal](2,-1) circle (3pt) node[align=center,below,color=teal] (3){3};
\filldraw [color=teal](3,-1) circle (3pt) node[align=center,below,color=teal] (){};
\filldraw [color=teal](4,-1) circle (3pt) node[align=center,below,color=teal] (){};
\filldraw [color=teal](5,-1) circle (3pt) node[align=center,below,color=teal] ($j$){$j$};
\filldraw [color=teal](6,-1) circle (3pt) node[align=center,below,color=teal] ($j+1$){$j+1$};
\draw [color=teal](0,-1)--(2,-1) (5,-1)--(6,-1);
\draw [color=teal, dashed] (2,-1)--(5,-1);

\filldraw [color=teal](0,-2) circle (3pt) node[align=center,below,color=teal] (1){1};
\filldraw [color=teal](1,-2) circle (3pt) node[align=center,below,color=teal] (2){2};
\filldraw [color=teal](2,-2) circle (3pt) node[align=center,below,color=teal] (3){3};
\filldraw [color=teal](3,-2) circle (3pt) node[align=center,below,color=teal] (){};
\filldraw [color=teal](4,-2) circle (3pt) node[align=center,below,color=teal] (){};
\filldraw [color=teal](5,-2) circle (3pt) node[align=center,below,color=teal] ($j$){$j$};
\filldraw [color=teal](6,-2) circle (3pt) node[align=center,below,color=teal] ($j+1$){$j+1$};
\draw [color=teal](0,-2)--(2,-2) (5,-2)--(6,-2);
\draw [color=teal, dashed] (2,-2)--(5,-2);

\filldraw [color=cyan](0,0) circle (3pt) node[align=center,below,color=cyan] (1){1};
\filldraw [color=magenta](1,0) circle (3pt) node[align=center,below,color=magenta] (2){2};
\filldraw [color=magenta](2,0) circle (3pt) node[align=center,below,color=magenta] (3){3};
\filldraw [color=magenta](3,0) circle (3pt) node[align=center,below,color=magenta] (){};
\filldraw [color=magenta](4,0) circle (3pt) node[align=center,below,color=magenta] (){};
\filldraw [color=magenta](5,0) circle (3pt) node[align=center,below,color=magenta] ($j$){$j$};
\filldraw [color=magenta](6,0) circle (3pt) node[align=center,below,color=magenta] ($j+1$){$j+1$};
\draw [color=magenta] (1,0)--(2,0) (5,0)--(6,0);
\draw [color=magenta, dashed] (2,0)--(5,0);

\end{tikzpicture}
$$
\caption{\label{fig:ftp2example} The subgraph triples used to calculate $M_{P_2}$.}
\end{figure}
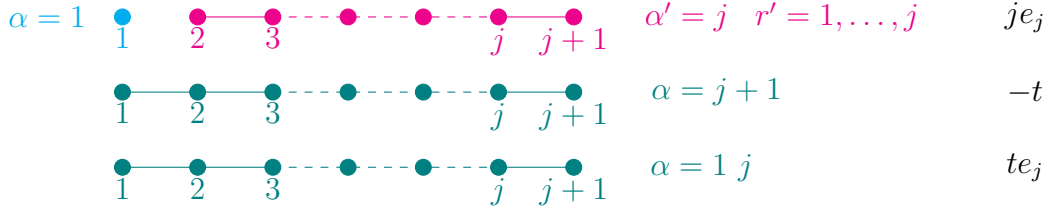

It turns out that the operation of gluing graphs corresponds to multiplying matrices.

\begin{proposition}\label{prop:ftmult}
For graphs $G$ and $H$, we have
\begin{equation*}
M_{G+H}=M_GM_H.
\end{equation*}
\end{proposition}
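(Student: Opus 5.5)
The plan is to prove the entrywise identity
\begin{equation*}
(M_{G+H})_{i,j}=\sum_{k\geq 1}(M_G)_{i,k}(M_H)_{k,j}
\end{equation*}
by a weight-preserving bijection between $\ST^{(i)}(G+H+P_j)$ and the set of triples $(k,\strip_G,\strip_H)$ with $\strip_G\in\ST^{(i)}(G+P_k)$ and $\strip_H\in\ST^{(k)}(H+P_j)$. The sum over $k$ is finite by Proposition~\ref{prop:ftmatrixzeroes}.

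Let $G$ have $n$ vertices and $H$ have $n'$ vertices, and let $v=n$ be the glued vertex of $G+H$. Since $v$ is a cut vertex separating $G$ from $H+P_j$, every component of $\strip\in\ST^{(i)}(G+H+P_j)$ other than the one containing $v$ lies entirely on one side. The component $C''$ containing $v$ splits as $C''_G\cup C''_H$, meeting only in $v$, and both pieces are connected. Let $a=|C''_G|$, $b=|C''_H|$, and let $\beta$ be the composition of $C''$, so $|\beta|=a+b-1$.

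\textbf{Splitting the composition.} Choose the unique index $m$ with $\beta_1+\cdots+\beta_{m-1}\leq a-1<\beta_1+\cdots+\beta_m$, and set $k=\beta_1+\cdots+\beta_m-a+1\geq 1$. Define $\delta=(\beta_1,\ldots,\beta_m)$ and $\gamma=(k,\beta_{m+1},\ldots)$.
\begin{itemize}
\item $\delta$ has size $a+k-1=|C''_G\cup P_k|$, and its last part satisfies $\delta_m\geq k$ exactly by the choice of $m$.
\item $\gamma$ has size $b$ and first part $k$.
\end{itemize}
On the $G$ side, give $C''_G+P_k$ the composition $\delta$ and keep the other $G$-components, obtaining $\strip_G\in\ST^{(i)}(G+P_k)$. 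On the $H$ side, give $C''_H$ the composition $\gamma$ with $r=1$ and keep the rest, obtaining $\strip_H\in\ST^{(k)}(H+P_j)$.

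The value of $r$ on $C''$ transfers to the $G$ side, since $\delta_1=\beta_1$ when $C''$ does not contain vertex $1$. The conditions involving vertex $1$ (first part $i$, $r=1$) and the path $P_j$ (contained in the last component, last part $\geq j$) transfer unchanged. This includes the degenerate cases where $C''$ also contains vertex $1$, or where $C''$ also contains $P_j$ (then $\gamma$ has length one and $\gamma_1=k\geq j$ must be checked). The inverse map reglues: it concatenates $\delta$ with $\gamma$ minus its first part, replacing $\delta_m$ by $\delta_m-k+\gamma_1$ appropriately. Uniqueness of $m$ makes the map bijective.

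\textbf{Weights.}
\begin{itemize}
\item \emph{Edges.} $E(\strip_G)=E_G(\strip)+(k-1)$, where $E_G(\strip)$ counts the edges of $S$ lying in $G$; the extra $k-1$ edges are those of $P_k$, which must all lie in $C''_G+P_k$. This is cancelled by $t^{1-k}$, so the $t$-powers multiply to $t^{E(\strip)+1-j}$.
\item \emph{Reduced types.} $\type'(\strip_G)$ contains the parts of $\delta$, and $\type'(\strip_H)$ omits $\gamma_1=k$. Their union is exactly $\type'(\strip)$, so $e_{\type'(\strip_G)}e_{\type'(\strip_H)}=e_{\type'(\strip)}$.
\item \emph{Signs.} The $G$ side has parity $(n+k-1)-\ell_G+(1-k)$ and the $H$ side has $(n'+j-1)-\ell_H+(1-j)$, where $\ell_G$ and $\ell_H$ are the total numbers of parts on each side. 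Since $\ell_G+\ell_H=\ell(\type(\strip))+1$, the sum agrees mod $2$ with $(n+n'+j-2)-\ell(\type(\strip))+(1-j)$, as required.
\end{itemize}

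The main obstacle is the splitting step. One must verify that the cut index $m$ is well defined and that the constraints (last part $\geq k$ on the $G$ side, first part $=k$ with $r=1$ on the $H$ side) match exactly one $k$ for each $\beta$. One must also handle carefully the boundary cases where $C''$ contains vertex $1$ or the tail path $P_j$, or where $n=1$ or $n'=1$.
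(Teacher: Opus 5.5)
Your proposal is correct and is essentially the paper's proof: the same bijection, cutting the composition of the component through the glued vertex at the minimal index whose partial sum reaches $|C''_G|$, with the same $k$, the same regluing inverse, and the same weight bookkeeping. The one check you leave open, $k\geq j$ when $C''\supseteq P_j$ and $\gamma=(k)$, holds because then $k=b=|C''_H|\geq j$; your sign count is also more careful than the paper's, which asserts $\sign(\strip)=\sign(\strip\vert_G)\sign(\strip\vert_H)$ and omits the $(-1)^{1-j}$ normalisation, whereas the true discrepancy $(-1)^{k-1}$ is cancelled by the $(-1)^{1-k}$ factor, as your parity computation shows.
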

The proof is nearly identical to the proof in \cite[Section~3]{gluesinglevertex} for a similarly defined matrix for $X_G(\bm x)$, except we must now keep track of the power of $t$ as well.

\begin{proof}
Given graphs $G, H$ with $n=|G|$ and $n'=|H|$, we construct a bijection
\begin{equation}\label{eqn:attachatone_signature}\varphi:\bigsqcup_{k\geq 1}\ST^{(i)}(G+P_k)\times\ST^{(k)}(H+P_j) \to \ST^{(i)}(G+H+P_j)
\end{equation}
%\begin{equation}\label{eqn:break_signature}\text{break}:\ST^{(i)}(G+H+P_j)\to\bigsqcup_{k\geq 1}\ST^{(i)}(G+P_k)\times\ST^{(k)}(H+P_j)\end{equation}
as follows: given
\[\mathcal{S}=\{\ctrip, \ctrip_1,\ldots,\ctrip_m\}\in\ST^{(i)}(G+P_k) \text{ and } \mathcal{S}'=\{\ctrip',\ctrip'_1,\ldots,\ctrip'_{m'}\}\in\ST^{(k)}(H+P_j),\] let $\ctrip=(C,\alpha,r)$ be the component triple with $n\in C$ and $\ctrip'=(C', \alpha', 1)\in\mathcal{S}'$ be the component triple with $1 \in C'$. Then, after adding $(n-1)$ to all vertices in $\strip'$ so they lie in $[n, n+n'+k-2]$, we define
\[\varphi(\strip, \strip')=\big\{\big((C\setminus P_k) \cup C', \alpha_1\cdots \alpha_{\ell}\cdot \alpha'_2\cdots\alpha'_\ell, r\big), \ctrip_1,\ldots,\ctrip_m,\ctrip'_1,\ldots,\ctrip'_{m'}\big\},\]
where $((C \setminus P_k) \cup C')$ is the component obtained by removing the path $P_k$ from component $C$ and attaching component $C'$, shown in Figure~\ref{fig:ftvarphiexample}. This results in a valid component triple since we remove $k$ vertices and remove $\alpha'_1=k$ from the composition, so the new composition indeed has size $|(C\setminus P_k)\cup C'|$.

The inverse is defined as follows: given $\strip \in \ST^{(i)}(G+H+P_j)$, let $\ctrip = (C, \alpha, r) \in \strip$ be the component with vertex $n\in C$. Denote $C\vert_G$ and $C\vert_H$ to be the restriction of $C$ to the vertices of $G$ and $H$ respectively, so $|C|=|C\vert_G|+|C\vert_H|-1$. Since the sum of the components in $\alpha$ is $|C| \geq |C\vert_G|$, there exists a minimal $s$ where $\alpha_1+\cdots+\alpha_s\geq |C\vert_G|$. Let
\[k=\alpha_1+\cdots+\alpha_s-|C\vert_G| + 1,\]
and define
\[\ctrip\!\vert_G = (C\vert_G \cup P_k, \alpha_1\cdots \alpha_s, r) \text{ and } \ctrip\!\vert_H = (C\vert_H, k\ \alpha_{s+1}\cdots \alpha_{\ell}, 1).\]
Define $\mathcal{S}\vert_G$ to be the subgraph triple containing $\mathcal{C}\vert_G$ and all component triples of $\strip$ whose components contain vertices less than $n$. Similarly, $\mathcal{S}\vert_H$ contains $\mathcal{C}\vert_H$ and all component triples with vertices greater than $n$ (after subtracting $(n-1)$ from all vertices so they lie in $[n'+k-1]$). Then, $\varphi^{-1}(\strip)=(\mathcal{S}\vert_G, \mathcal{S}\vert_H)$.

Note the inverse is well-defined; apart from the newly created $\mathcal{C}\vert_H$, the first part of the composition of every component triple is preserved and hence $\mathcal{S}\vert_G\in \ST^{(i)}(G+P_k)$. Similarly, the first part of the composition of $\mathcal{C}\vert_H$ is $k$ and hence $\mathcal{S}\vert_H \in \ST^{(k)}(H+P_j)$. Next, given $\strip=\varphi(\mathcal{S}\vert_G, \mathcal{S}\vert_H)$, we have
\begin{gather*}
\sign(\strip)=\sign(\mathcal{S}\vert_G)\sign(\mathcal{S}\vert_H),\\
e_{\type'(\mathcal{S})}=e_{\type'(\mathcal{S}\vert_G)}e_{\type'(\mathcal{S}\vert_H)},\\
E(\mathcal{S})=E(\mathcal{S}\vert_G)+E(\mathcal{S}\vert_H)-k+1.
\end{gather*}

Then we have, as desired, that
\begin{align*}
(M_{G+H})_{i,j}&=\sum_{\strip\in\ST^{(i)}(G+H+P_j)}\sign(\strip)\cdot t^{E(\strip)-j+1}\cdot e_{\type'(\strip)}\\
&=\sum_{k\geq 1}\left(\sum_{\strip\!\vert_G\in\ST^{(i)}(G+P_k)}\sign(\strip\!\vert_G)\cdot t^{E(\strip\!\vert_G)-k+1}\cdot e_{\type'(\strip\!\vert_G)}\right)\cdot\\
&\hspace{1.13cm}\left(\sum_{\strip\!\vert_H\in\ST^{(k)}(H+P_j)}\sign(\strip\!\vert_H)\cdot t^{E(\strip\!\vert_H)-j+1}\cdot e_{\type'(\strip\!\vert_H)}\right)\\
&=\sum_{k\geq 1}(M_G)_{i,k}(M_H)_{k,j}=(M_GM_H)_{i,j}.
\end{align*}
\end{proof}

    \begin{figure}
        \begin{tikzpicture}
        \tikzmath{\h1=8.5;\v1=-1.5;\h2=-1;};

\vertex[teal][-3pt][below]{1}[({\h1+\h2},\v1)]{(0,0)};
\vertex[magenta]{2}[({\h1+\h2},\v1)]{(0,1)};
\vertex[teal]{3}[({\h1+\h2},\v1)]{(1,1)};
\vertex[magenta][-3pt][below]{4}[({\h1+\h2},\v1)]{(1,0)};
\vertex[red]{5}[({\h1+\h2},\v1)]{({1+sqrt(3)/2}, 0.5)};
\vertex[teal]{6}[({\h1+\h2},\v1)]{({1+sqrt(3)}, 1)};
\vertex[teal][-3pt][below]{7}[({\h1+\h2},\v1)]{({1+sqrt(3)}, 0)};
\vertex[teal][-3pt][below]{8}[({\h1+\h2},\v1)]{({2+sqrt(3)}, 0)};

\draw[color=teal] (v1)--(v3)--(v5)--(v6)--(v7)--(v5) (v7)--(v8);
\draw [color=magenta] (v2)--(v4);
\draw[dashed, opacity=0.4] (v1)--(v2)--(v3)--(v4)--(v1) (v4)--(v5);

\node[align=center] at (\h1+1,-3) {$\strip\in \ST^{(1)}(G+P_2)$,\\$\textcolor{teal}{\ctrip=((C\setminus P_2)\cup C',\alpha=132, r=1)}$,\\
$s=2, k=\alpha_1+\alpha_2-|\mathcal{C}\vert_G|+1=2$.};

\tikzmath{\arrplace=(\h1+1)/2;\h1=-4;\h2=1;\v1=-1.5;\v2=-1;}

\draw [arrows = {-Stealth[scale=2]}] (\arrplace-1.25,-1)--(\arrplace+1.75,-1);
\draw [arrows = {-Stealth[scale=2]}] (\arrplace+1.75,-1)--(\arrplace-1.25,-1);

\vertex[teal][-3pt][below]{1}[(\h1,\v1)]{(0,0)};
\vertex[magenta]{2}[(\h1,\v1)]{(0,1)};
\vertex[teal]{3}[(\h1,\v1)]{(1,1)};
\vertex[magenta][-3pt][below]{4}[(\h1,\v1)]{(1,0)};
\vertex[red]{5}[(\h1,\v1)]{({1+sqrt(3)/2}, 0.5)};
\vertex[teal]{6}[(\h1,\v1)]{({2+sqrt(3)/2}, 0.5)};

\draw[color=teal] (v1)--(v3)--(v5)--(v6);
\draw [color=magenta] (v2)--(v4);
\draw[dashed, opacity=0.4] (v1)--(v2)--(v3)--(v4)--(v1) (v4)--(v5);

\vertex[red]{1}[(\h2,\v2)]{(0, 0)};
\vertex[teal]{2}[(\h2,\v2)]{({sqrt(3)/2}, 0.5)};
\vertex[teal][-3pt][below]{3}[(\h2,\v2)]{({sqrt(3)/2}, -0.5)};
\vertex[teal][-3pt][below]{4}[(\h2,\v2)]{({1+sqrt(3)/2}, -0.5)};
\draw[color=teal] (v1)--(v2)--(v3)--(v1) (v3)--(v4);

\node[align=center, font=\Huge] at ({(\h1+\h2+3)/2}, \v2) {$+$};
\node[align=center] at ({\h1+1}, {\v2-2}) {$\strip\in \ST^{(1)}(G+P_2)$,\\
\textcolor{teal}{$\ctrip = (C, \alpha=13, r=1)$}.\\
};
\node[align=center] at ({\h2+1}, {\v2-2}) {$\strip\in \ST^{(2)}(G+P_2)$,\\
\textcolor{teal}{$\ctrip' = (C', \alpha=22, r=1)$}.\\
};

        \end{tikzpicture}
    \caption{\label{fig:ftvarphiexample} Example of map $\varphi$ with $i=1, j=2, k=2$.}
    \end{figure}
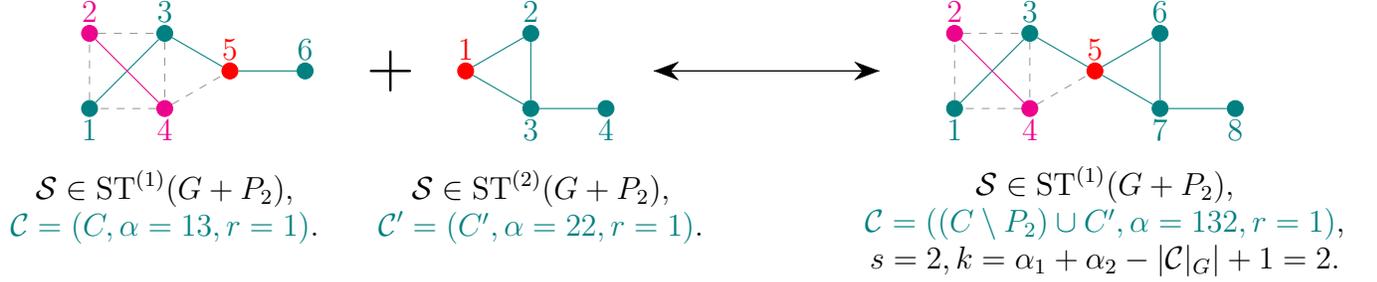

Using this property, we can calculate $M_{P_n}$.
\begin{proposition}\label{prop:ftpn}
For the path $P_n$, we have
\begin{equation}\label{eqn:ftm:fpn}
(M_{P_n})_{i,j}=\sum_{\substack{\alpha\models n+j-i-1,\\ \alpha_\ell\geq j}}(\alpha_1+t)\cdots(\alpha_\ell+t)\cdot (-t)^{n-\ell(\alpha)-1}e_{\text{sort}(\alpha)}.
\end{equation}
Note that if $n+j-i-1 = 0$, we include the empty composition in our sum, meaning the empty product $(\alpha_1+t)\cdots(\alpha_{\ell}+t)=1$, so $(M_{P_n})_{n+j-1,j}=(-t)^{n-1}$.
\end{proposition}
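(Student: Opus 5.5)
Since $P_n = P_{n-1}+P_2$, Proposition~\ref{prop:ftmult} gives $M_{P_n}=M_{P_{n-1}}M_{P_2}$, and with Proposition~\ref{prop:ftp1} we get $M_{P_n}=(M_{P_2})^{n-1}$. The plan is to prove \eqref{eqn:ftm:fpn} by induction on $n$. The base case $n=1$ is the identity matrix: the only composition allowed is the empty one, when $j=i$, with coefficient $(-t)^{0}=1$. The case $n=2$ matches Proposition~\ref{prop:ftp2}. Indeed, for $i=1$ the compositions of $j$ with last part $\geq j$ are just $(j)$, giving $(j+t)(-t)^0e_j$. For $i=j+1$ we get the empty composition, giving $(-t)^1$. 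Every other $i$ gives an empty sum.

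For the inductive step, write $(M_{P_n})_{i,j}=\sum_k (M_{P_{n-1}})_{i,k}(M_{P_2})_{k,j}$. By Proposition~\ref{prop:ftp2} only $k=1$ and $k=j+1$ contribute, so
\[(M_{P_n})_{i,j}=(M_{P_{n-1}})_{i,1}(t+j)e_j - t\,(M_{P_{n-1}})_{i,j+1}.\]
Now split the compositions $\alpha\models n+j-i-1$ with $\alpha_\ell\geq j$ by their last part.

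If $\alpha_\ell=j$, write $\alpha=\beta\cdot j$ with $\beta\models n-i-1=(n-1)+1-i-1$. There is no constraint on $\beta$ beyond last part $\geq 1$, which is automatic, and the empty $\beta$ is allowed. The summand equals $(j+t)e_j$ times the summand for $\beta$ in $(M_{P_{n-1}})_{i,1}$. To check the exponent of $-t$: it is $n-\ell(\beta)-2=(n-1)-\ell(\beta)-1$, which matches.

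If $\alpha_\ell\geq j+1$, then $\alpha$ is exactly a composition of $(n-1)+(j+1)-i-1=n+j-i-1$ with last part $\geq j+1$, which is what indexes $(M_{P_{n-1}})_{i,j+1}$. Its summand there carries $(-t)^{n-\ell(\alpha)-2}$, so the extra factor $-t$ restores $(-t)^{n-\ell(\alpha)-1}$.

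These two cases partition the index set, so adding them gives the formula. The step needing the most care is the boundary bookkeeping. First, the empty composition must correctly yield $(M_{P_n})_{n+j-1,j}=(-t)^{n-1}$. This comes from the second term via the empty composition in $(M_{P_{n-1}})_{n+j-1,j+1}$. The first term contributes nothing here, since it would need $\beta\models -j$. Second, when $\alpha$ is empty it has no last part, so we must confirm it is never double counted. The empty composition arises only when $n+j-i-1=0$, and then only through the $k=j+1$ term.
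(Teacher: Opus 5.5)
Your proposal is correct and is essentially the paper's own proof. Both use induction via $P_n=P_{n-1}+P_2$ and $M_{G+H}=M_GM_H$ to reduce to $(M_{P_{n-1}})_{i,1}(t+j)e_j-t\,(M_{P_{n-1}})_{i,j+1}$, then split the compositions by $\alpha_\ell=j$ versus $\alpha_\ell\geq j+1$. Your explicit check that the empty composition arises only through the $k=j+1$ term is slightly more careful than the paper, which silently absorbs it into the second sum.
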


\begin{proof}
For the base case $n\leq 2$, the result follows from Proposition~\ref{prop:ftp1} and Proposition~\ref{prop:ftp2}. For $n\geq 3$, because $P_n=P_{n-1}+P_2$, we use induction and Proposition~\ref{prop:ftmult}. Defining the polynomial $w_\alpha=(\alpha_1+t)\cdots(\alpha_\ell+t)\cdot (-t)^{n-\ell(\alpha)-1}$, we have
\begin{align*}
(M_{P_n})_{i,j}&=(M_{P_{n-1}}M_{P_2})_{i,j}=(M_{P_{n-1}})_{i,1}\cdot (t+j)e_j+(M_{P_{n-1}})_{i,j+1}\cdot (-t)\\
&=\sum_{\alpha\models n-1-i}(-t)^{-1}w_\alpha e_{\text{sort}(\alpha)}(j+t)e_j+\sum_{\alpha\models n+j-i-1, \ \alpha_\ell\geq j+1}(-t)^{-1}w_\alpha e_{\sort(\alpha)}(-t)\\
&=\sum_{\substack{\alpha\models n+j-i-1\\\alpha_\ell=j}} w_\alpha e_{\text{sort}(\alpha)}+\sum_{\substack{\alpha\models n+j-i-1\\\alpha_\ell\geq j+1}}w_\alpha e_{\text{sort}(\alpha)}=\sum_{\substack{\alpha\models n+j-i-1\\\alpha_\ell\geq j}} w_\alpha e_{\text{sort}(\alpha)}.
\end{align*}
\end{proof}
\begin{remark} We could also calculate $M_{P_n}$ by enumerating $\ST^{(i)}(P_n+P_j)$ and derive the same result.\end{remark}
\begin{theorem}
    The Tutte symmetric function of $P_n$ is
    \begin{equation}\label{eq:xbpn}XB_{P_n}(\bm x; t) = \sum_{\alpha\models n}\alpha_1(\alpha_2+t)\cdots(\alpha_\ell+t)(-t)^{n-\ell(\alpha)}e_{\sort(\alpha)}.\end{equation}
\end{theorem}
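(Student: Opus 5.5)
We plan to combine Proposition~\ref{prop:xmatrix} with the explicit formula for $M_{P_n}$ in Proposition~\ref{prop:ftpn}. By Proposition~\ref{prop:xmatrix},
\[
XB_{P_n}(\bm x;t)=\vec v M_{P_n}\vec w^T=\sum_{i\geq 1} i e_i\,(M_{P_n})_{i,1}.
\]
So we only need the first column of $M_{P_n}$. We set $j=1$ in \eqref{eqn:ftm:fpn}. The condition $\alpha_\ell\geq 1$ is then vacuous, and we get
\[
(M_{P_n})_{i,1}=\sum_{\beta\models n-i}(\beta_1+t)\cdots(\beta_\ell+t)(-t)^{n-\ell(\beta)-1}e_{\sort(\beta)}.
\]
This includes the empty composition when $i=n$, in which case the entry is $(-t)^{n-1}$. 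By Proposition~\ref{prop:ftmatrixzeroes} the entries vanish for $i\geq n+1$, so the sum over $i$ is finite.

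Next we reindex. We pair each $i\in[n]$ and $\beta\models n-i$ with the composition $\alpha=(i,\beta_1,\ldots,\beta_{\ell(\beta)})\models n$. This is a bijection between such pairs and compositions $\alpha\models n$, with $\alpha_1=i$ and $\ell(\alpha)=\ell(\beta)+1$. Under this correspondence:
\begin{itemize}
\item the factor $ie_i\,e_{\sort(\beta)}$ becomes $\alpha_1 e_{\sort(\alpha)}$;
\item the product $(\beta_1+t)\cdots(\beta_{\ell(\beta)}+t)$ becomes $(\alpha_2+t)\cdots(\alpha_\ell+t)$;
\item the exponent $n-\ell(\beta)-1$ becomes $n-\ell(\alpha)$.
\end{itemize}
Summing over $\alpha\models n$ then gives exactly \eqref{eq:xbpn}.

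The only delicate point is bookkeeping at the boundary. The case $i=n$, with $\beta$ empty, must correspond to $\alpha=(n)$ and contribute $n(-t)^{n-1}e_n$. This agrees with the formula, since the empty product is $1$ and $\ell(\alpha)=1$. We should also check that the sign exponent in Proposition~\ref{prop:ftpn} is measured with respect to $n$ (the path length) and not the size of $\beta$, so that it shifts correctly by one. Beyond that, the argument is a direct substitution.
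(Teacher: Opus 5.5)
Your proposal is correct and follows the paper's proof exactly: set $j=1$ in Proposition~\ref{prop:ftpn}, substitute into $\vec v M_{P_n}\vec w^T=\sum_i ie_i(M_{P_n})_{i,1}$ from Proposition~\ref{prop:xmatrix}, and merge the double sum into a single sum by prepending $i=\alpha_1$ to the composition. Your bookkeeping at the boundary is also right: when $i=n$ the empty $\beta$ gives the term $n(-t)^{n-1}e_n$ for $\alpha=(n)$, and the exponent shifts as $n-\ell(\beta)-1=n-\ell(\alpha)$.
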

\begin{proof}
    Using Equation~\ref{eq:xmatrix}, we calculate
    \begin{align*}
XB_{P_n}(\bm x; t)&=\vec{v}M_{P_n}\vec{w}^T=\sum_{i}ie_i(M_{P_n})_{i,1}\\
&=\sum_{i}ie_i\sum_{\substack{\alpha\models n-i}}(\alpha_1+t)\cdots(\alpha_{\ell}+t)(-t)^{n-\ell(\alpha)-1}e_{\sort(\alpha)}\\
&=\sum_{\alpha\models n}\alpha_1(\alpha_2+t)\cdots(\alpha_{\ell}+t)(-t)^{n-\ell(\alpha)}e_{\sort(\alpha)},
    \end{align*}
where the final step involves replacing the double sum with a single sum where $\alpha_1=i$.
\end{proof}
\begin{remark}
    Note the formula for the chromatic symmetric function for $P_n$ is
    \[X_{P_n}(\bm x) = \sum_{\alpha\models n}\alpha_1(\alpha_2-1)\cdots(\alpha_{\ell}-1)e_{\sort(\alpha)}.\]
    The Tutte symmetric analog replaces the $-1$ with $t$ and adds additional $-t$ factors.
\end{remark}
Instead of gluing graphs at separate vertices, we can consider gluing graphs at a fixed vertex $x$ of $G$. We define $(G, x)+(H,y)$ by gluing vertices $x$ and $y$ together, with an example shown in Figure~\ref{fig:S_4}. We define the subset of subgraph triples
\begin{equation*}
\ST^{(i)}((G,x)+P_j)=\{\strip\in\ST((G,x)+P_j): \ \alpha_1=i, r=1, P_j\subseteq C, \alpha_\ell\geq j\},
\end{equation*}
where $(G,x)+P_j$ is obtained by attaching the path of length $j$ at vertex $x$ of $G$ and $\mathcal C=(C,\alpha,r)$ is the component triple containing $x$. Let
\begin{equation*}
(M_{(G,x)})_{i,j}=\sum_{\mathcal \strip\in\ST^{(i)}((G,x)+P_j)}\sign(\strip) \cdot (-1)^{1-j} \cdot t^{E(\strip)+1-j} \cdot e_{\type'(\strip)},
\end{equation*}
then the same arguments as before give us that
\begin{equation*}
XB_G(\bm x; t)=\vec vM_{(G,x)}\vec w^T\text{ and }M_{(G,x)+(H,y)}=M_{(G,x)}M_{(H,y)}.
\end{equation*}
\begin{figure}
    \begin{tikzpicture}
        \vertexs[red][-3pt][30]{x}{(0,0)};
        \node[red] at (0.2,0.2) {$x$};
        \vertexs{2}{(0,1)};
        \vertexs{3}{(210:1)};
        \vertexs{4}{(330:1)};
        \draw (v3)--(vx)--(v2) (vx)--(v4);
    \end{tikzpicture}
    \caption{\label{fig:S_4} The graph $S_4=K_{1,3}$ obtained by gluing three $P_2$ graphs at a common vertex $x$.}
\end{figure}
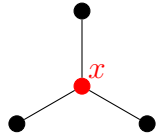
\begin{proposition}
    Let $S_n=K_{1,n-1}$ be the star graph with $n$ vertices and let $x$ be the center vertex, then
    \begin{equation}\label{eq:star_final}(M_{(S_n,x)})_{i,j}=\sum_{\substack{\alpha\models n+j-i, \alpha_1 \leq n\\
        \len(\alpha) \geq 1\\ \text{if $\len(\alpha)\geq 2$ then $\alpha_{\ell}\geq j$}}}\binom{n-1}{\alpha_1-1}(-t)^{n-\alpha_1}(e_1)^{\alpha_1-1}(-1)^{\ell(\alpha)-1}e_{\sort(\alpha \setminus \alpha_1)},\end{equation}
    The restriction $\len(\alpha)\geq 1$ is because we do not allow the empty composition in this sum.
\end{proposition}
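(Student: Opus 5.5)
The plan is to argue by induction on $n$, using the multiplicativity $M_{(G,x)+(H,y)}=M_{(G,x)}M_{(H,y)}$ noted just above. Since $S_n=(S_{n-1},x)+(P_2,x)$, where $x$ is the center of $S_{n-1}$ and an endpoint of $P_2$, we get
\[
M_{(S_n,x)}=M_{(S_{n-1},x)}M_{(P_2,x)},
\]
with base case $S_1=P_1$, where $M_{(P_1,x)}=I$ by Proposition~\ref{prop:ftp1}.

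\textbf{Step 1: compute $M_{(P_2,x)}$ for $x$ an endpoint.} I would first check that \eqref{eq:star_final} with $n=1$ gives $I$. Then I would enumerate $\ST^{(i)}((P_2,x)+P_j)$ directly, as in the proof of Proposition~\ref{prop:ftp2}. The graph is the path $P_{j+1}$, with the extra leaf $y$ hanging off $x$. The triples split by whether the edge $xy$ lies in the component $C\ni x$.
\begin{itemize}
\item If $xy\notin S$, then $y$ is a singleton $((y),(1),1)$ contributing $e_1$. The component $C=P_j$ must carry a composition $\alpha$ of $j$ with $\alpha_1=i$ and $\alpha_\ell\ge j$. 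This forces $\alpha=(j)$ and $i=j$.
\item If $xy\in S$, then $|C|=j+1$, $\alpha_1=i$ and $\alpha_\ell\geq j$. This forces $\alpha=(j+1)$ with $i=j+1$, or $\alpha=(1,j)$ with $i=1$.
\end{itemize}
After the normalizing factor $(-1)^{1-j}t^{1-j}$, I expect
\[
M_{(P_2,x)}=e_1 I + (\text{terms with } i=j+1 \text{ and } i=1).
\]
The intended weights are $-t$ on the subdiagonal $i=j+1$ and $te_j$ in the first row. This is the $P_2$ matrix reindexed so that the $e_1$ moves onto the diagonal.

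\textbf{Step 2: the inductive multiplication.} Writing the right side of \eqref{eq:star_final} as $\sum_\alpha c_\alpha$, I would compute
\[
(M_{(S_{n-1},x)}M_{(P_2,x)})_{i,j}=e_1\,(M_{S_{n-1}})_{i,j}-t\,(M_{S_{n-1}})_{i,j+1}+te_j\,[\text{$1$-row term}]\cdot(M_{S_{n-1}})_{i,1}.
\]
I would then match the three pieces to a decomposition of the compositions $\alpha\models n+j-i$ indexing the sum:
\begin{itemize}
\item compositions whose first part is increased by one, i.e. the new leaf is absorbed into the $x$-block;
\item compositions where the new leaf contributes an $e_1$;
\item compositions of length $1$ versus length at least $2$, where the constraint $\alpha_\ell\ge j$ switches on.
\end{itemize}
Pascal's rule $\binom{n-1}{a-1}=\binom{n-2}{a-1}+\binom{n-2}{a-2}$ should reconcile the binomial coefficients. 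The extra factor $e_1$ or $-t$ should reconcile the powers $(e_1)^{\alpha_1-1}(-t)^{n-\alpha_1}$.

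\textbf{Main obstacle.} I expect the main obstacle to be the boundary bookkeeping. This covers the length-one compositions, where the condition $\alpha_\ell\ge j$ is waived, and the first-row term, which produces compositions with a new last part equal to $j$. It also covers the sign $(-1)^{\ell(\alpha)-1}$.

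If the matrix product becomes unwieldy, I would fall back to a direct enumeration of $\ST^{(i)}((S_n,x)+P_j)$. In that enumeration I would choose which $a-1$ leaves join $x$'s component, giving the factor $\binom{n-1}{a-1}$. I would then let each remaining leaf act as a singleton factor, and resum over compositions of the $x$-component, using the same change of variables as in the proof of Proposition~\ref{prop:ftpn}.
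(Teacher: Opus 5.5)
Your route is sound but differs from the paper's main proof. The paper's proof is exactly your fallback. It groups $\ST^{(i)}((S_n,x)+P_j)$ by the number $k$ of leaves joined to $x$, which gives $\binom{n-1}{k}(-t)^k(e_1)^{n-1-k}\sum_{\beta\models k+j-i,\ \beta_\ell\ge j}(-1)^{\ell(\beta)}e_{\sort(\beta)}$, and then reindexes by $\alpha_1=n-k$. The product $M_{(S_{n-1},x)}M_{(P_2,x)}$ appears in the paper only as a remark. Your entries for $M_{(P_2,x)}$ are correct ($e_1$ on the diagonal, $-t$ on the subdiagonal, $te_j$ in the first row), and the induction does close.

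You should fix the bookkeeping in Step 2, though. In the formula, $\alpha_1$ is not the first part of the $x$-component's composition; that part is $i$. Instead, $\alpha_1-1$ counts the leaves left as singletons. So the $e_1$ diagonal term is the case where the new leaf is a singleton: it shifts $\alpha_1\mapsto\alpha_1+1$ and contributes $\binom{n-2}{\alpha_1-2}$. The $-t$ and $te_j$ terms together form the case where the new leaf joins $x$: $\alpha_1$ stays fixed and the coefficient is $\binom{n-2}{\alpha_1-1}$. Your first two bullets therefore describe the same piece. Write $g(m,j)=\sum_{\beta\models m,\ \beta_\ell\ge j}(-1)^{\ell(\beta)}e_{\sort(\beta)}$, with $g(0,j)=1$. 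Then the last two terms combine through $g(m+j,j+1)-e_j\,g(m,1)=g(m+j,j)$. This identity just splits the tail $\alpha\setminus\alpha_1$ into ``empty or last part $\ge j+1$'' versus ``last part exactly $j$''. The length-one compositions need no separate treatment, because $g(0,j+1)=g(0,j)$.

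As for trade-offs, the paper's enumeration is self-contained and explains $\binom{n-1}{\alpha_1-1}$ combinatorially. Your route needs only a two-vertex enumeration plus Pascal's rule. However, it relies on the pointed multiplicativity $M_{(G,x)+(H,y)}=M_{(G,x)}M_{(H,y)}$, which the paper asserts only by analogy with Proposition~\ref{prop:ftmult}.
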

\begin{proof}
    We will calculate $(M_{(S_n,x)})_{i,j}$ by enumerating over $\ST^{(i)}((S_n,x)+P_j)$. For any $\strip$, let $\ctrip = (C, \alpha, r)$ be the component triple with $x\in C$. Let
    \[A_k = \left\{\strip \in \ST^{(i)}((S_n, x)+P_j) : |C| - j = k\right\},\]
    meaning $A_k$ is the set of subgraph triples where $k$ of the $n-1$ non-center vertices are connected to the center. There are $\binom{n-1}{k}$ possible subgraphs so
    \[\sum_{\strip\in A_k} \sign(\strip)\cdot (-1)^{1-j} \cdot t^{E(\strip)+1-j}\cdot e_{\type'(\strip)}=\binom{n-1}{k} \cdot (-t)^{k}\cdot (e_1)^{n-k-1}\cdot \sum_{\substack{\alpha\models k+j-i\\\alpha_\ell\geq j}}(-1)^{\ell(\alpha)}e_{\sort(\alpha)}.\]
    Since $k\in [0,n-1]$, summing over $k$ gives
    \begin{equation}\label{eq:star_doublesum}(M_{(S_n,x)})_{i,j} = \sum_{k=0}^{n-1}\binom{n-1}{k}(-t)^{k}(e_1)^{n-k-1}\sum_{\substack{\alpha\models k+j-i\\\alpha_\ell\geq j}}(-1)^{\ell(\alpha)}e_{\sort(\alpha)}.\end{equation}
    Note in the sum above, if $k+j-i=0$, then the empty composition is included in the sum (since $\alpha_\ell$ doesn't exist, we say it vacuously satisfies the condition). To simplify, we replace $k$ with $n-1-k$ and then turn the double sum into a single sum over compositions $\alpha \models n+j-i$ where $k = \alpha_1-1$:
    \begin{align*}
        (M_{(S_n,x)})_{i,j}&=\sum_{k=0}^{n-1}\binom{n-1}{k}(-t)^{n-k-1}(e_1)^k\sum_{\substack{\alpha\models n+j-k-i-1\\\alpha_{\ell}\geq j}}(-1)^{\ell(\alpha)}e_{\sort(\alpha)}\\
        &=\sum_{\substack{\alpha\models n+j-i, \alpha_1 \leq n\\
        \len(\alpha) \geq 1\\ \text{if $\len(\alpha)\geq 2$ then $\alpha_{\ell}\geq j$}}}\binom{n-1}{\alpha_1-1}(-t)^{n-\alpha_1}(e_1)^{\alpha_1-1}(-1)^{\ell(\alpha)-1}e_{\sort(\alpha \setminus \alpha_1)}
    \end{align*}
    In the second line, the sum splits into two conditions ($\len(\alpha)=1$ or $\len(\alpha)\geq 2$) because Equation~\ref{eq:star_doublesum} allows empty compositions in the summand. Note $\alpha\setminus \alpha_1$ means remove the first part of the composition.
\end{proof}
\begin{remark}
    Alternatively, one can calculate $M_{(S_n, x)}=(M_{(S_2,x)})^{n-1}$ by first calculating $M_{(S_2, x)}$ and then using matrix multiplication to derive the general formula.
\end{remark}
\begin{theorem}
    The Tutte symmetric function for $S_n$ is
    \[XB_{S_n}(\bm x; t)=\sum_{\substack{\alpha \models n+1\\\alpha_1\leq n}}\alpha_2\binom{n-1}{\alpha_1-1}(-t)^{n-\alpha_1}(e_1)^{\alpha_1-1}(-1)^{\ell(\alpha)}e_{\sort(\alpha\setminus\alpha_1)}.\]
\end{theorem}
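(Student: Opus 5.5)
We apply the identity $XB_G(\bm x;t)=\vec v M_{(G,x)}\vec w^T$, stated just before Figure~\ref{fig:S_4}, to $G=S_n$ with $x$ the center. This gives
\[XB_{S_n}(\bm x;t)=\sum_{i\geq 1} i e_i\,(M_{(S_n,x)})_{i,1}.\]
Into this we substitute the formula \eqref{eq:star_final} with $j=1$. The condition ``if $\ell(\alpha)\geq 2$ then $\alpha_\ell\geq 1$'' is then vacuous, so for each $i$ we sum over all nonempty compositions $\alpha\models n+1-i$ with $\alpha_1\leq n$. Since $|\alpha|\geq 1$, only $1\leq i\leq n$ contributes, and the term $i=n$ gives the single composition $\alpha=(1)$.

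Next we merge the double sum into a single sum over compositions of $n+1$, in the same way as in the proof of \eqref{eq:xbpn}. Given $i$ and $\beta\models n+1-i$ with $\beta$ nonempty, we form the composition
\[\gamma=(\beta_1,\,i,\,\beta_2,\ldots,\beta_\ell)\models n+1.\]
That is, we insert $i$ as the \emph{second} part, keeping $\beta_1$ first. Then $\gamma_1=\beta_1\leq n$ and $\gamma_2=i$. The coefficient $i e_i$ becomes $\gamma_2 e_{\gamma_2}$. The remaining factors become
\[\binom{n-1}{\gamma_1-1}(-t)^{n-\gamma_1}(e_1)^{\gamma_1-1}.\]
The symmetric function part combines as $e_{\gamma_2}e_{\sort(\beta\setminus\beta_1)}=e_{\sort(\gamma\setminus\gamma_1)}$. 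For the sign, $\ell(\gamma)=\ell(\beta)+1$, so $(-1)^{\ell(\beta)-1}=(-1)^{\ell(\gamma)}$.

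The map $(i,\beta)\mapsto\gamma$ is a bijection onto compositions $\gamma\models n+1$ with $\ell(\gamma)\geq 2$ and $\gamma_1\leq n$. Its inverse reads off $i=\gamma_2$ and deletes the second part. The condition $\gamma_1\leq n$ already forces $\ell(\gamma)\geq 2$, because $|\gamma|=n+1$. So the index set is exactly $\{\gamma\models n+1:\gamma_1\leq n\}$, as in the statement.

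The only delicate step is this reindexing, where we must check that nothing is lost or doubled at the boundary. Two cases need checking:
\begin{itemize}
\item The term $i=n$, $\beta=(1)$ corresponds to $\gamma=(1,n)$. It gives $n e_n\cdot\binom{n-1}{0}(-t)^{n-1}$, which matches the formula.
\item The excluded composition $\gamma=(n+1)$ would correspond to $i=0$, which does not occur.
\end{itemize}
After this, the identity follows directly.
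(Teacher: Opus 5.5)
Your proof is correct and follows the paper's own argument: you evaluate $\vec v M_{(S_n,x)}\vec w^T=\sum_i ie_i(M_{(S_n,x)})_{i,1}$ using the $j=1$ case of the star-matrix formula, then merge the double sum by inserting $i$ as the second part $\alpha_2$ of a composition of $n+1$. Your explicit checks of the sign $(-1)^{\ell(\beta)-1}=(-1)^{\ell(\gamma)}$ and of the boundary terms spell out what the paper leaves implicit.
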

\begin{proof}
    Using Equation~\ref{eq:xmatrix}, we have
    \begin{align*}
        XB_{S_n}(\bm x; t) &= \vec{v}M_{S_n}\vec{w}^T=\sum_{i}ie_i(M_{S_n})_{i, 1}\\
        &=\sum_{i} i e_i \sum_{\substack{\alpha \models n+1-i\\\alpha_1\leq n\\\ell(\alpha)\geq 1}}\binom{n-1}{\alpha_1-1}(-t)^{n-\alpha_1}(e_1)^{\alpha_1-1}(-1)^{\ell(\alpha)-1}e_{\sort(\alpha\setminus \alpha_1)}\\
        &=\sum_{\alpha\models n+1, \alpha_1\leq n}\alpha_2\binom{n-1}{\alpha_1-1}(-t)^{n-\alpha_1}(e_1)^{\alpha_1-1}(-1)^{\ell(\alpha)}e_{\sort(\alpha\setminus \alpha_1)},
    \end{align*}
    where the final line involves replacing the double sum with a single sum by inserting $i=\alpha_2$ into the composition.
\end{proof}

\begin{comment}
Instead of connecting graphs at different vertices, we can imagine connecting graphs at the same vertex. Given graphs $G, H$ and vertices $a,b \in V(G)$ and $c,d \in V(H)$, we define
\[(G,a,b) + (H,c,d) = \left((G \sqcup H) / \{b \sim c\}, a, d \right).\]
Informally, we glue vertex $b$ of $G$ to vertex $c$ of $H$, generalizing the previous definition of attaching graphs. In fact, setting $a,b,c,d = 1,n,1,n'$, we get
\[(G, 1, n) + (H, 1, n') = (G + H, 1, n + n' - 1).\]
\end{comment}

\section{Gluing the first and last vertices}
\label{section:traceresult}

In this section, we show that $XB_{G^\circ}(\bm x; t)=\trace(M_G)$, where the graph $G^\circ$ is obtained by gluing vertices $1$ and $n$ of $G$ together.

\begin{definition}
Let $G=([n],E)$ be a graph with $n\geq 2$ vertices. We define the multigraph
\begin{equation*}
G^\circ=([n-1],\{\{i,j\}\in E: \ 1\leq i<j\leq n-1\}\cup\{\{i,1\} : \{i,n\}\in E\}).
\end{equation*}
\end{definition}

Note $G^\circ$ is allowed to have multiple edges between the same vertices, which ensures that $G^\circ$ has the same number of edges as $G$. We now state the main result of this section. Note that by Proposition~\ref{prop:ftmatrixzeroes}, we have $(M_G)_{k,k}=0$ for $k\geq n$, so it makes sense to take the trace of $M_G$.

\begin{theorem}\label{thm:trace:trace}
Let $G=([n],E)$ be a graph with $n\geq 2$ vertices. Then we have
\begin{equation*}
XB_{G^\circ}(\bm x; t)=\trace(M_G)=\sum_{k=1}^{n-1}(M_G)_{k,k}.
\end{equation*}
\end{theorem}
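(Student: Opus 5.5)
We prove the identity by a sign- and weight-preserving bijection, in the spirit of the proof of Proposition~\ref{prop:ftmult}. Fix $k$ and write
\[
\mathcal T=\bigsqcup_{k=1}^{n-1}\ST^{(k)}(G+P_k),
\]
whose weighted sum is $\trace(M_G)$. We want to match $\mathcal T$ with $\ST(G^\circ)$, weighted by $\sign\cdot t^{E}\cdot e_{\type}$ as in the subgraph triple formula for $XB_{G^\circ}$.

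\textbf{Gluing map.} Take $\strip\in\ST^{(k)}(G+P_k)$. Let $\ctrip=(C,\alpha,1)$ be the component triple containing vertex $1$, with $\alpha_1=k$. Let $\ctrip'=(C',\alpha',r')$ be the one containing $n$; it contains $P_k$ and satisfies $\alpha'_\ell\ge k$. There are two cases.

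\emph{Case $\ctrip\neq\ctrip'$.} Delete $P_k$ from $C'$ and identify $n$ with $1$. This merges $C$ and $C'\setminus P_k$ into one connected subgraph of $G^\circ$. For its composition, take $\alpha'$ with its last part $\alpha'_\ell$ replaced by $\alpha'_\ell-k$ (dropping the part if it is $0$), then concatenate $(\alpha'_\ell-k)+\alpha_1$ in a cyclic fashion. Concretely, the natural choice is to put the merged part first: the new composition is
\[
\bigl(\alpha'_\ell-k+\alpha_1\bigr)\cdot\alpha_2\cdots\alpha_{\ell(\alpha)}\cdot\alpha'_1\cdots\alpha'_{\ell-1}.
\]
We record $r$ as some value determined by $\alpha'_\ell-k+1$, so that $r$ ranges over the first part. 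One must check sizes: $|C|+|C'|-k-1$ equals the new total once $1$ and $n$ are identified.

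\emph{Case $\ctrip=\ctrip'$.} Here $C$ contains $1$, $n$ and $P_k$. Delete $P_k$ and glue $1$ with $n$. Any edges lost or gained come only from the identification, and since $G^\circ$ keeps multiedges, $E$ is preserved after the $t^{1-k}$ correction accounts for the $k-1$ path edges. Cyclically rotate $\alpha$ similarly, merging $\alpha_\ell-k$ into $\alpha_1=k$.

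The key bookkeeping is this: a component triple of $G^\circ$ containing vertex $1$ with first part $m$ and marker $r\in[1,m]$ should arise exactly once, from $k=m-r+1$, i.e.\ the choice of $r$ in $G^\circ$ is traded for the choice of $k$. The removed part $\alpha'_\ell-k=r-1\ge0$ supplies the rest. This exactly explains the trace: summing $r$ over $1,\dots,m$ corresponds to summing diagonal entries $k$. Meanwhile $e_{\type'}$ omits $\alpha_1=k$, and $\alpha'_\ell\to\alpha'_\ell-k+k$ restores a full part $m$, so $e_{\type'(\strip)}$ becomes $e_{\type}$ of the image.

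\textbf{Checks.} The signs must agree: $(-1)^{n+k-1-\ell}\cdot(-1)^{1-k}$ should equal $(-1)^{(n-1)-\ell_{\text{new}}}$. The length changes by $-1$ when parts merge; when $\alpha'_\ell=k$ the dropped zero part also matters, and this must be tracked carefully. The powers of $t$ must agree since $k-1$ path edges are removed. Finally, one must invert the map: given a triple of $G^\circ$, split vertex $1$ back into $1$ and $n$, split the first part $m$ as $k$ and $r-1$, and reattach $P_k$.

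\textbf{Main obstacle.} The main obstacle is the inverse. When the component of $1$ in $G^\circ$ splits into one or two connected pieces after un-gluing, it must land consistently in the two cases above. We also need to confirm the composition reordering is a bijection. I expect to need a convention, analogous to the ``minimal $s$'' trick in Proposition~\ref{prop:ftmult}, deciding which parts go to the piece containing $1$ versus $n$. Boundary cases ($\alpha'_\ell=k$, $k=n-1$) need checking as well.
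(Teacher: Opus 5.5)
\textbf{The proposal has a genuine gap in the case $\ctrip\neq\ctrip'$.}

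Your case $\ctrip=\ctrip'$ is fine. There $\varphi^{-1}(C^\circ)$ is connected, and trading the marker of the image for $k$ gives a bijection. Your $k=m-r+1$ works, as does the paper's $k=r$ with image $(\varphi(C),\alpha_\ell\cdot\alpha_2\cdots\alpha_{\ell-1},\alpha_1)$.

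In the case $\ctrip\neq\ctrip'$, since $\alpha_1=k$, your merged part $\alpha'_\ell-k+\alpha_1$ is simply $\alpha'_\ell$, so no zero part ever arises. Your image is therefore $(\varphi(C\cup C'),\,\alpha'_\ell\cdot\alpha_2\cdots\alpha_{\ell(\alpha)}\cdot\alpha'_1\cdots\alpha'_{\ell-1},\,\alpha'_\ell-k+1)$. Any marker determined by $\alpha'_\ell$ and $k$ discards the marker $r'\in[1,\alpha'_1]$ of $\ctrip'$, so the map cannot be injective.

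Concretely, take $G=P_3$ and let $C^\circ$ be the component of $G^\circ$ on $\{1,2\}$ that uses the image of the edge $\{2,3\}$. Its only preimages are the three triples with $k=1$, $C=\{1\}$, $\alpha=(1)$, $C'=\{2,3\}$ and $(\alpha',r')\in\{((2),1),((2),2),((1,1),1)\}$. Your map sends the first two to the same triple (composition $(2)$, marker $2$). It never reaches composition $(2)$ with marker $1$, for which your inverse would prescribe $k=2>|C|$.

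So the bookkeeping ``the marker in $G^\circ$ is traded for $k$'' fails when $\varphi^{-1}(C^\circ)$ has two pieces. There $k$ is not encoded by the marker at all, and your inverse has no rule for where the remaining parts split between $C$ and $C'$.

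\textbf{The fix.} Keep $\alpha'$ intact at the front together with its marker, and delete only the part $\alpha_1=k$. That is, send the pair $\ctrip,\ctrip'$ to $(\varphi(C\cup C'),\alpha'\cdot\alpha_2\cdots\alpha_{\ell},r')$. Then $k$ is recovered from sizes rather than from the marker.

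Suppose $\varphi^{-1}(C^\circ)$ splits as $C\ni 1$ and $C'\ni n$, and $C^\circ$ carries $(\beta,r)$. Take the minimal $s$ with $\beta_1+\cdots+\beta_s\ge|C'|$, set $k=\beta_1+\cdots+\beta_s-|C'|+1$, and return $(C'\cup P_k,\beta_1\cdots\beta_s,r)$ and $(C,k\,\beta_{s+1}\cdots\beta_\ell,1)$.

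Minimality gives $\beta_s\ge k$, which is exactly the constraint $\alpha'_\ell\ge k$. Conversely, $\alpha'_\ell\ge k$ forces $\alpha'_1+\cdots+\alpha'_{\ell-1}<|C'|$ (with $|C'|$ counting only vertices of $G$), so the inverse applied to a forward image recovers $s=\ell(\alpha')$.

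This is the ``minimal $s$'' convention you anticipated, but it requires a different forward map from the one you wrote. With it, in both cases exactly one part (equal to $k$) and exactly $k-1$ path edges disappear. Hence the factors $(-1)^{1-k}$ and $t^{1-k}$ work out as you outlined, and $e_{\type'(\strip)}$ equals $e_{\type}$ of the image.
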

The proof is nearly identical to \cite[Section~5]{gluesinglevertex}, and involves finding a bijection on subgraph triples.

\begin{proof}
Recall $G$ and $G^\circ$ have the same number of edges (as we allow multiple edges in $G^\circ$). Thus, we have a bijection on edge multisets $\varphi : E(G) \to E(G^\circ)$ where for $u < v$, we have
\[\varphi(\{u,v\})=\begin{cases}
\{u,1\},&\text{ if }v=n,\\
\{u,v\},&\text{ otherwise.}
\end{cases}\]
Given subgraph $H=(V(H), E(H))$ of $G$, we define the subgraph $\varphi(H)\subseteq G^\circ$ as
\[\varphi(H)=(V(H)\setminus \{n\}, \varphi(E(H))),\]
and given a subgraph $H^\circ=(V(H^\circ), E(H^\circ))$ of $G^\circ$, we define $\varphi^{-1}(H^\circ)\subseteq G$ as
\begin{equation*}
\varphi^{-1}(H^\circ)=(V^*,\varphi^{-1}(E(H^\circ))),\text{ where }V^*=\begin{cases}
V(H^\circ)\cup\{n\},&\text{ if }1\in V(H^\circ),\\
V(H^\circ),&\text{ otherwise.}
\end{cases}
\end{equation*}

   \begin{figure}
    \begin{tikzpicture}

    %% ── BOTTOM ROW ── G° on the LEFT, G+P3 on the RIGHT ──────────────────

   \tikzmath{\h1=0;}
    \vertex[red]{1}[(\h1, 0)]{(0.5, 0)};
    \vertexs[gray]{2}[(\h1,0)]{(-1, 0)};
    \vertexs[teal]{3}[(\h1,0)]{(-0.5,-0.866)};
    \vertexs[teal]{5}[(\h1,0)]{(-1,-1.732)};
    \vertexs[teal]{6}[(\h1,0)]{(0,-1.732)};
    \vertexs[teal]{7}[(\h1,0)]{(-0.5,-2.598)};
    \vertexs[teal]{8}[(\h1,0)]{(0,-3.464)};
    \vertexs[teal]{9}[(\h1,0)]{(1,-3.464)};
    \vertexs[teal]{10}[(\h1,0)]{(1.5,-2.598)};
    \vertexs[teal]{11}[(\h1,0)]{(1,-1.732)};
    \vertexs[teal]{12}[(\h1,0)]{(2,-1.732)};
    \vertexs[gray]{13}[(\h1,0)]{(2.5,-0.866)};
    \vertexs[gray]{14}[(\h1,0)]{(2,0)};
    \draw[color=teal] (v1)--(v3) (v3)--(v5)--(v7)--(v6)--(v3) (v5)--(v6) (v7)--(v8)--(v9);
    % magenta: vertices 10,11,12,15,16,17,18
    \draw[color=teal] (v10)--(v11) (v10)--(v12)--(v11) (v1)--(v11) (v1)--(v12);
    % teal: vertices 2,13,14
    \draw[color=gray] (v13)--(v14);

    \draw[dashed, opacity=0.4] (v1)--(v2)--(v3) (v2)--(v5) (v9)--(v10) (v11)--(v6) (v12)--(v13) (v1)--(v13) (v1)--(v14);

    \node[align=center] at (\h1+0.75,-4.6) {$\strip\in \ST(G^\circ)$,\\$\textcolor{teal}{\ctrip^\circ=(C^\circ=\varphi(C \cup C'),\alpha=253, r=2)}$\\$s=2, k=\alpha_1+\alpha_2-4+1=4$};

    \tikzmath{\arrplace=-4.5;\varr=-1.732;}
    \draw [arrows = {-Stealth[scale=2]}] (\arrplace-1.75,\varr)--(\arrplace+1.75,\varr);
    \draw [arrows = {-Stealth[scale=2]}] (\arrplace+1.75,\varr)--(\arrplace-1.75,\varr);

    \tikzmath{\h1=-10;}
    \vertex[red]{1}[(\h1, 0)]{(0, 0)};
    \vertexs[gray]{2}[(\h1,0)]{(-1, 0)};
    \vertexs[cyan]{3}[(\h1,0)]{(-0.5,-0.866)};
    \vertexs[cyan]{5}[(\h1,0)]{(-1,-1.732)};
    \vertexs[cyan]{6}[(\h1,0)]{(0,-1.732)};
    \vertexs[cyan]{7}[(\h1,0)]{(-0.5,-2.598)};
    \vertexs[cyan]{8}[(\h1,0)]{(0,-3.464)};
    \vertexs[cyan]{9}[(\h1,0)]{(1,-3.464)};
    \vertexs[magenta]{10}[(\h1,0)]{(1.5,-2.598)};
    \vertexs[magenta]{11}[(\h1,0)]{(1,-1.732)};
    \vertexs[magenta]{12}[(\h1,0)]{(2,-1.732)};
    \vertexs[gray]{13}[(\h1,0)]{(2.5,-0.866)};
    \vertexs[gray]{14}[(\h1,0)]{(2,0)};
    \vertex[red][-3pt][left]{15}[(\h1,0)]{(1,0)};
    \vertex[magenta][-3pt][left]{16}[(\h1,0)]{(1,0.5)};
    \vertex[magenta][-3pt][left]{17}[(\h1,0)]{(1,1)};
    \vertex[magenta][-3pt][left]{18}[(\h1,0)]{(1,1.5)};

    % cyan: vertices 1,3,5,6,7,8,9
    \draw[color=cyan] (v1)--(v3) (v3)--(v5)--(v7)--(v6)--(v3) (v5)--(v6) (v7)--(v8)--(v9);
    % magenta: vertices 10,11,12,15,16,17,18
    \draw[color=magenta] (v10)--(v11) (v10)--(v12)--(v11) (v15)--(v11) (v15)--(v12) (v15)--(v16)--(v17)--(v18);
    % teal: vertices 2,13,14
    \draw[color=gray] (v13)--(v14);

    \draw[dashed, opacity=0.4] (v1)--(v2)--(v3) (v2)--(v5) (v9)--(v10) (v11)--(v6) (v12)--(v13) (v15)--(v13) (v15)--(v14);

    %\draw[dashed, opacity=0.4] (v1)--(v2)--(v3)--(v1) (v2)--(v5) (v3)--(v5)--(v7)--(v6)--(v3) (v5)--(v6) (v7)--(v8)--(v9)--(v10)--(v11)--(v6) (v10)--(v12)--(v11) (v12)--(v13)--(v14) (v15)--(v11) (v15)--(v12) (v15)--(v13) (v15)--(v14) (v15)--(v16)--(v17)--(v18);

    \node[align=center] at (\h1+0.75,-4.6) {$\strip\in \ST^{(4)}(G+P_4)$,\\$\textcolor{cyan}{\ctrip=(C,\alpha=43, r=1)}$\\$\textcolor{magenta}{\ctrip'=(C',\alpha'=25, r'=2)}$\\$k=4$};

%% ── BOTTOM ROW ──
    \tikzmath{\v1=-8;}  % <-- change this value to shift the whole row up/down

    \tikzmath{\h1=0;}
    \vertex[red]{1}[(\h1, \v1)]{(0.5, 0)};
    \vertexs[gray]{2}[(\h1,\v1)]{(-1, 0)};
    \vertexs[teal]{3}[(\h1,\v1)]{(-0.5,-0.866)};
    \vertexs[teal]{5}[(\h1,\v1)]{(-1,-1.732)};
    \vertexs[teal]{6}[(\h1,\v1)]{(0,-1.732)};
    \vertexs[teal]{7}[(\h1,\v1)]{(-0.5,-2.598)};
    \vertexs[teal]{8}[(\h1,\v1)]{(0,-3.464)};
    \vertexs[teal]{9}[(\h1,\v1)]{(1,-3.464)};
    \vertexs[teal]{10}[(\h1,\v1)]{(1.5,-2.598)};
    \vertexs[teal]{11}[(\h1,\v1)]{(1,-1.732)};
    \vertexs[teal]{12}[(\h1,\v1)]{(2,-1.732)};
    \vertexs[gray]{13}[(\h1,\v1)]{(2.5,-0.866)};
    \vertexs[gray]{14}[(\h1,\v1)]{(2,0)};
    \draw[color=teal] (v1)--(v3) (v3)--(v5)--(v7)--(v6)--(v3) (v5)--(v6) (v7)--(v8)--(v9);
    \draw[color=teal] (v10)--(v11) (v10)--(v12)--(v11) (v1)--(v11) (v1)--(v12) (v9)--(v10) (v11)--(v6);
    \draw[color=gray] (v13)--(v14);
    \draw[dashed, opacity=0.4] (v1)--(v2)--(v3) (v2)--(v5) (v12)--(v13) (v1)--(v13) (v1)--(v14);
    \node[align=center] at (\h1+0.75,\v1-4.6) {$\strip\in \ST(G^\circ)$,\\$\textcolor{teal}{\ctrip^\circ=(C^\circ=\varphi(C),\alpha=721, r=4)}$\\$k=r=4$};

    \tikzmath{\arrplace=-4.5; \varr=\v1-1.732;}
    \draw [arrows = {-Stealth[scale=2]}] (\arrplace-1.75,\varr)--(\arrplace+1.75,\varr);
    \draw [arrows = {-Stealth[scale=2]}] (\arrplace+1.75,\varr)--(\arrplace-1.75,\varr);

    \tikzmath{\h1=-10;}
    \vertex[red]{1}[(\h1, \v1)]{(0, 0)};
    \vertexs[gray]{2}[(\h1,\v1)]{(-1, 0)};
    \vertexs[teal]{3}[(\h1,\v1)]{(-0.5,-0.866)};
    \vertexs[teal]{5}[(\h1,\v1)]{(-1,-1.732)};
    \vertexs[teal]{6}[(\h1,\v1)]{(0,-1.732)};
    \vertexs[teal]{7}[(\h1,\v1)]{(-0.5,-2.598)};
    \vertexs[teal]{8}[(\h1,\v1)]{(0,-3.464)};
    \vertexs[teal]{9}[(\h1,\v1)]{(1,-3.464)};
    \vertexs[teal]{10}[(\h1,\v1)]{(1.5,-2.598)};
    \vertexs[teal]{11}[(\h1,\v1)]{(1,-1.732)};
    \vertexs[teal]{12}[(\h1,\v1)]{(2,-1.732)};
    \vertexs[gray]{13}[(\h1,\v1)]{(2.5,-0.866)};
    \vertexs[gray]{14}[(\h1,\v1)]{(2,0)};
    \vertex[red][-3pt][left]{15}[(\h1,\v1)]{(1,0)};
    \vertex[teal][-3pt][left]{16}[(\h1,\v1)]{(1,0.5)};
    \vertex[teal][-3pt][left]{17}[(\h1,\v1)]{(1,1)};
    \vertex[teal][-3pt][left]{18}[(\h1,\v1)]{(1,1.5)};
    \draw[color=teal] (v1)--(v3) (v3)--(v5)--(v7)--(v6)--(v3) (v5)--(v6) (v7)--(v8)--(v9);
    \draw[color=teal] (v10)--(v11) (v10)--(v12)--(v11) (v15)--(v11) (v15)--(v12) (v15)--(v16)--(v17)--(v18) (v9)--(v10) (v11)--(v6);
    \draw[color=gray] (v13)--(v14);
    \draw[dashed, opacity=0.4] (v1)--(v2)--(v3) (v2)--(v5) (v12)--(v13) (v15)--(v13) (v15)--(v14);
    \node[align=center] at (\h1+0.75,\v1-4.6) {$\strip\in \ST^{(4)}(G+P_4)$,\\$\textcolor{teal}{\ctrip=(C,\alpha=4217, r=1)}$\\$k=4$\\};
    \end{tikzpicture}
    \caption{\label{fig:tracepsi} Some examples of the map $\psi:\bigsqcup_k \ST^{(k)}(G+P_k) \to \ST(G^\circ)$.}
    \end{figure}
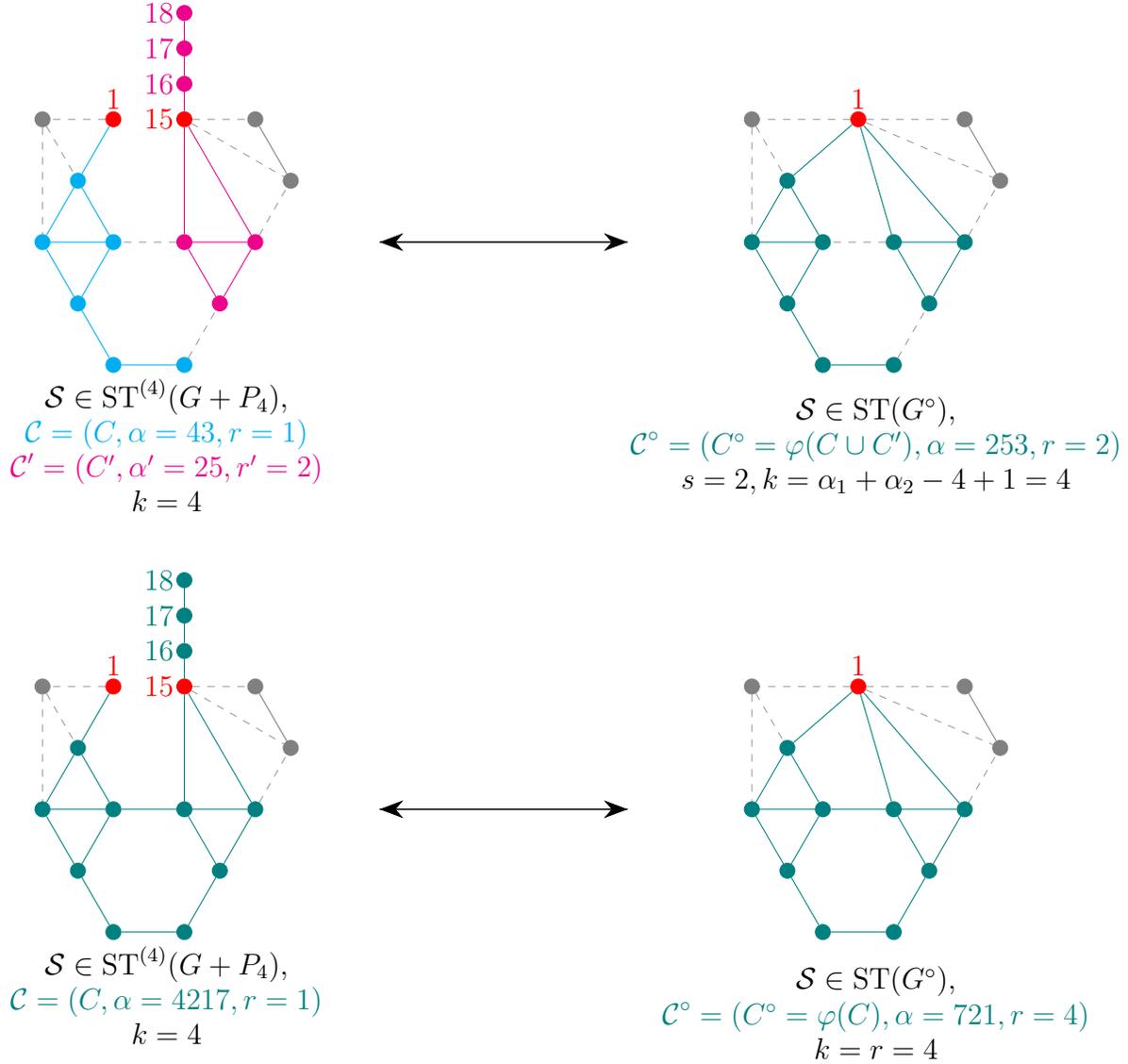

We define a bijection
\[\psi : \bigsqcup_k \ST^{(k)}(G+P_k) \to \ST(G^\circ)\]
as follows: given $\strip\in \ST^{(k)}(G+P_k)$, let $\ctrip = (C,\alpha, r)\in \strip$ and $\ctrip'=(C',\alpha',r') \in \strip$ where $1\in C, n\in C'$. If $\ctrip\neq \ctrip'$, then $\psi(\strip)$ replaces both component triples with
\[\ctrip^\circ = (\varphi(C\cup C'), \alpha'\cdot \alpha_2\cdots\alpha_{\ell}, r'),\]
which produces a subgraph triple in $\ST(G^\circ)$, as shown in the first row of Figure~\ref{fig:tracepsi}. If instead $\ctrip=\ctrip'$, then $\psi(\strip)$ replaces $\ctrip$ with
\[\ctrip^\circ=(\varphi(C), \alpha_{\ell}\cdot\alpha_2\cdots \alpha_{\ell-1}, \alpha_1).\]
Note in this case, $\alpha_1=k\leq \alpha_{\ell}$, so $\psi(\strip)$ is indeed a subgraph triple in $\ST(G^\circ)$. An example is shown in the second row in Figure~\ref{fig:tracepsi}.

The inverse is defined as follows: given $\strip\in \ST(G^\circ)$, let $\ctrip^\circ=(C^\circ,\alpha, r)\in \strip$ be the component with $1\in C^\circ$. Then, $\varphi^{-1}(C^\circ)$ has either 1 or 2 connected components. Say it has two components $C, C'$ (with vertex $1\in C, n\in C'$). Since $|\alpha|=|C^\circ|\geq |C'|$, there exists a minimal $s$ with $\alpha_1+\cdots +\alpha_s\geq |C'|$. Letting
\[k=\alpha_1+\cdots+\alpha_s-|C'|+1,\]
define $\varphi^{-1}(\strip)$ to replace $\ctrip^\circ$ with
\[\ctrip' = (C'\cup P_k, \alpha_1\cdots \alpha_s, r) \text{ and } \ctrip = (C, k\ \alpha_{s+1}\cdots \alpha_\ell, 1).\]
Note $\psi^{-1}(\strip)$ is a valid subgraph triple in $\ST^{(k)}(G+P_k)$ as the minimality of $s$ ensures $\alpha_1+\cdots+\alpha_{s-1}<|C'|$ and thus $\alpha_s\geq k$. If $\varphi^{-1}(C^\circ)$ instead has only 1 connected component $C$, then letting $k=r$ we define $\psi^{-1}(\strip)$ to replace $\ctrip^\circ$ with
\[\ctrip=(C, k\ \alpha_2\cdots \alpha_{\ell}\cdot \alpha_1, 1).\]
This is also a valid subgraph triple as $\alpha_1=k\leq \alpha_{\ell}$. The two cases correspond to the first and second row in Figure~\ref{fig:tracepsi} respectively.

Having found bijection $\psi$, letting $\strip^{\circ}=\psi(\strip)$ for $\strip\in\ST^{(k)}(G+P_k)$, note
\begin{gather*}
\sign(\strip^\circ)=\sign(\strip)\cdot (-1)^{1-k}, \; \; e_{\type(\strip^\circ)}=e_{\type'(\strip)}, \; \; E(\strip^\circ)=E(\strip)-k+1.
\end{gather*}
Then we have, as desired, that
\begin{align*}
    XB_{G^\circ}(\bm x; t) =& \sum_{\strip^\circ\in \ST(G^\circ)}\sign(\strip^\circ)\cdot t^{E(\strip^\circ)}\cdot e_{\type(\strip^\circ)}\\
    =&\sum_{k\geq 1}\left(\sum_{\strip\in\ST^{(k)}(G+P_k)}\sign(\strip)\cdot (-1)^{1-k}\cdot t^{E(\strip)-k+1}\cdot e_{\type'(\strip)}\right)\\
    =&\sum_{k \geq 1}(M_G)_{k,k}=\trace(M_G).
\end{align*}
\end{proof}

We can use this to find the Tutte symmetric function for $C_n$.
\begin{proposition}\label{prop:cycle-from-path}
We have
\[XB_{C_n}(\bm x; t)=\sum_{\alpha\models n}\alpha_1(\alpha_1+t)\cdots(\alpha_{\ell}+t)\cdot (-t)^{n-\ell(\alpha)}e_{\sort(\alpha)}.\]
\end{proposition}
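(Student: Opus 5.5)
Observe that $C_n=(P_{n+1})^\circ$: gluing vertices $1$ and $n+1$ of the path $P_{n+1}$ yields the $n$-cycle (for $n\geq 3$; for $n=2$ and $n=1$ we get the double edge and the loop, consistent with the multigraph convention). Theorem~\ref{thm:trace:trace} then gives
\[
XB_{C_n}(\bm x;t)=\trace(M_{P_{n+1}})=\sum_{k=1}^{n}(M_{P_{n+1}})_{k,k}.
\]
We substitute the explicit formula of Proposition~\ref{prop:ftpn} with $n$ replaced by $n+1$ and $i=j=k$:
\[
(M_{P_{n+1}})_{k,k}=\sum_{\substack{\beta\models n\\ \beta_\ell\geq k}}(\beta_1+t)\cdots(\beta_\ell+t)(-t)^{n-\ell(\beta)}e_{\sort(\beta)}.
\]
Here the composition size is $n+1+k-k-1=n$, and the exponent of $-t$ is $(n+1)-\ell(\beta)-1$. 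Since $n\geq 1$, $\beta$ is nonempty.

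Summing over $k$ and exchanging the order of summation, each composition $\beta\models n$ is counted once for every $k$ with $1\leq k\leq\beta_\ell$, that is, $\beta_\ell$ times. This gives
\[
XB_{C_n}(\bm x;t)=\sum_{\beta\models n}\beta_\ell(\beta_1+t)\cdots(\beta_\ell+t)(-t)^{n-\ell(\beta)}e_{\sort(\beta)}.
\]
Finally we reindex by the reversal $\alpha=\rev(\beta)$, which is a bijection on compositions of $n$. It preserves $\ell$, $\sort$, and the product $\prod(\alpha_i+t)$, and it turns $\beta_\ell$ into $\alpha_1$. This yields exactly the stated formula.

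The main point needing care is the identification $C_n=(P_{n+1})^\circ$, which must match the labelling convention of the definition of $G^\circ$: vertex $n+1$ is identified with vertex $1$, and the edge $\{n,n+1\}$ becomes $\{n,1\}$. We must also check the small cases $n=1,2$ against what $C_n$ is taken to mean, namely a loop or a double edge. The rest is bookkeeping of the exponent of $-t$ and of the summation range, which stays finite since $(M_{P_{n+1}})_{k,k}=0$ for $k>n$ by Proposition~\ref{prop:ftmatrixzeroes}.
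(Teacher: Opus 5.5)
Your proof is correct and follows the paper's argument essentially step for step. You identify $C_n=(P_{n+1})^\circ$, take the diagonal entries of $M_{P_{n+1}}$ from Proposition~\ref{prop:ftpn} in Theorem~\ref{thm:trace:trace}, count each composition $\alpha_\ell$ times, and reverse the composition to move $\alpha_\ell$ to $\alpha_1$. Your check of the cases $n=1,2$ (a loop and a double edge) is a small extra the paper leaves implicit.
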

\begin{proof}
Since $(P_{n+1})^\circ = C_n$, we have
    \begin{align*}XB_{C_n}(\bm x; t) &= \trace(M_{P_{n+1}}) = \sum_{k=1}^{n}\sum_{\substack{\alpha\models n\\\alpha_{\ell}\geq k}} (\alpha_1 + t)\cdots (\alpha_{\ell} + t)\cdot (-t)^{n-\ell(\alpha)}e_{\sort(\alpha)}\\
    &= \sum_{\alpha\models n}\alpha_{\ell}(\alpha_1+t)\cdots(\alpha_{\ell}+t)\cdot (-t)^{n-\ell(\alpha)}e_{\sort(\alpha)},\end{align*}
where we simplify the double sum by noting each composition $\alpha$ is counted precisely $\alpha_{\ell}$ times. Swapping $\alpha_{\ell}$ with $\alpha_1$ gives the final expression.
\end{proof}

\begin{remark}
    Similar to the formulas for $XB_{P_n}(\bm x; t)$ and $X_{P_n}(\bm x)$, the Tutte symmetric formula takes the chromatic symmetric formula for $C_n$, replaces the $-1$ with $t$, and adds some powers of $(-t)$.
\end{remark}

\begin{remark}
    Notice that given graphs $A, B, C$, then $(A+B+C)^\circ \cong (B+C+A)^\circ$ but is not necessarily isomorphic to $(A+C+B)^\circ$ (shown in Figure~\ref{fig:ABCACB}), analogous to how for matrices $A, B, C$, then $\trace(ABC)=\trace(BCA)$ but is not necessarily equal to $\trace(ACB)$.
\end{remark}
   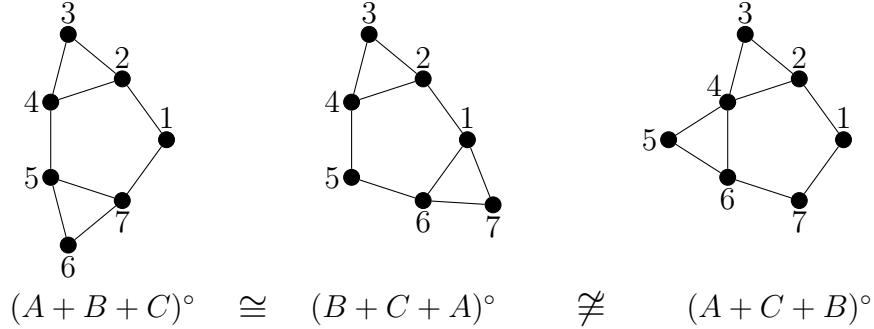
\begin{figure}
        \begin{tikzpicture}
\tikzmath{\v1=6; \h1=0; \r=0.851; \d=4;};
\vertex[black][-3pt][above]{1}[(\h1,\v1)]{(0:\r)};
\vertex[black][-3pt][above]{2}[(\h1,\v1)]{(72:\r)};
\vertex[black][-3pt][above]{3}[(\h1,\v1)]{(108:{sqrt(3)*\r})};
\vertex[black][-3pt][left]{4}[(\h1,\v1)]{(144:\r)};
\vertex[black][-3pt][left]{5}[(\h1,\v1)]{(-144:\r)};
\vertex[black][-3pt][below]{6}[(\h1,\v1)]{(-108:{sqrt(3)*\r})};
\vertex[black][-3pt][below]{7}[(\h1,\v1)]{(-72:\r)};
\draw (v1)--(v2)--(v3)--(v4)--(v2) (v4)--(v5)--(v6)--(v7)--(v5) (v7)--(v1);
\node[align=center] at (\h1,-2.25+\v1) {$(A+B+C)^\circ$};

\node[align=center, font=\large] at ({\h1+\d/2}, {\v1-2.25}) {$\cong$};

\tikzmath{\h1=\d;};
\vertex[black][-3pt][above]{1}[(\h1,\v1)]{(0:\r)};
\vertex[black][-3pt][above]{2}[(\h1,\v1)]{(72:\r)};
\vertex[black][-3pt][above]{3}[(\h1,\v1)]{(108:{sqrt(3)*\r})};
\vertex[black][-3pt][left]{4}[(\h1,\v1)]{(144:\r)};
\vertex[black][-3pt][left]{5}[(\h1,\v1)]{(-144:\r)};
\vertex[black][-3pt][below]{7}[(\h1,\v1)]{(-36:{sqrt(3)*\r})};
\vertex[black][-3pt][below]{6}[(\h1,\v1)]{(-72:\r)};
\draw (v1)--(v2)--(v3)--(v4)--(v2) (v4)--(v5)--(v6)--(v7)--(v1)--(v6);
\node[align=center] at (\h1,\v1+-2.25) {$(B+C+A)^\circ$};
\node[align=center, font=\large] at ({\h1+\d/2+1/2}, {\v1-2.25}) {$\not\cong$};
\tikzmath{\h1=\d+\h1+1;};
\vertex[black][-3pt][above]{1}[(\h1,\v1)]{(0:\r)};
\vertex[black][-3pt][above]{2}[(\h1,\v1)]{(72:\r)};
\vertex[black][-3pt][above]{3}[(\h1,\v1)]{(108:{sqrt(3)*\r})};
\vertex[black][-6pt][135]{4}[(\h1,\v1)]{(144:\r)};
\vertex[black][-3pt][below]{6}[(\h1,\v1)]{(-144:\r)};
\vertex[black][-3pt][left]{5}[(\h1,\v1)]{(180:{sqrt(3)*\r})};
\vertex[black][-3pt][below]{7}[(\h1,\v1)]{(-72:\r)};
\draw (v1)--(v2)--(v3)--(v4)--(v2) (v4)--(v5)--(v6)--(v4) (v6)--(v7)--(v1);
\node[align=center] at (\h1,\v1+-2.25) {$(A+C+B)^\circ$};
        \end{tikzpicture}
    \caption{\label{fig:ABCACB} Variants of $(A+B+C)^\circ$ where $A=P_2+K_3$, $B=P_2$, and $C=K_3+P_2$. Notice that vertex 4 in $(A+C+B)^\circ$ has degree 4 meaning $(A+C+B)^\circ\not\cong (A+B+C)^\circ$.}
    \end{figure}

\section{Reversing a Graph}\label{section:reverse}
In this section, we relate the operation of reversing a graph $G$ to $G_{\rev}$ (the graph with the vertex labelings reversed) with transposing $M_G$, up to a change of basis. Since $M_G$ depends on the labeling of the graph (in particular, the labels $1$ and $n$), certain graphs (like $G=K_2+K_3$) have $M_{G_{\rev}}\neq M_G$. More specifically, we have a function $f$ where $f(M_G)=M_{G_{\rev}}$. Since $G^\circ \cong G_{\rev}^\circ$ and $(G+H)_{\rev}\cong H_{\rev}+G_{\rev}$, function $f$ preserves the trace and is an antihomomorphism, meaning
\[\trace(f(M))=\trace(M), \hspace{1cm} f(PQ)=f(Q)f(P).\]

One natural family of functions with this property are transpositions up to change of basis (where $f(M)=K^{-1}M^TK$ for some fixed $K$). We will show the infinite symmetric matrix $K$ where
\[K_{i,j}=\frac{X_{K_i+K_j}(\bm x)}{(i-1)!(j-1)!},\hspace{20pt}
K = \left[
\begin{matrix}
e_1           & 2e_2           & 3e_3           & \cdots \\
2e_2          & e_{21}+3e_3    & 2e_{31}+4e_4   & \cdots \\
3e_3          & 2e_{31}+4e_4   & e_{32}+3e_{41}+5e_5 & \cdots \\
\vdots & \vdots & \vdots & \ddots
\end{matrix}
\right],\]
satisfies $Kf(M)= M^TK$. Interestingly, $K$ does not depend on $t$.

\begin{definition}
    For a graph $G = ([n], E)$, let
    \[G_{\rev}=([n], \{\{i, j\} : \{n + 1 - j, n + 1 - i\} \in E\}).\]

Note that $G\cong G_{\rev}$, meaning $XB_G(\bm x; t)=XB_{G_{\rev}}(\bm x; t)$. Additionally, 
\[(G_1+\cdots+G_k)_{\rev}=(G_k)_{\rev}+\cdots + (G_1)_{\rev}.\]
\end{definition}
\begin{example}
    Figure~\ref{fig:Grev_attach} shows an example of reversing a graph. Note reversal acts similar to transpose, where $(A+B)_{\rev}=B_{\rev}+A_{\rev}$.
\end{example}

  \begin{figure}
        \begin{tikzpicture}
\tikzmath{\d=4; \h1=\d; \v1=6; \r={sqrt(2)/2};};
\vertex[black][-3pt][above]{3}[(\h1,\v1)]{(45:\r)};
\vertex[black][-3pt][above]{2}[(\h1,\v1)]{(135:\r)};
\vertex[black][-3pt][below]{1}[(\h1,\v1)]{(-135:\r)};
\vertex[black][-3pt][below]{4}[(\h1,\v1)]{(-45:\r)};
\vertex[black][-3pt][above]{5}[(\h1,\v1)]{({0.5+sqrt(3)/2},0)};
\tikzmath{\h1={\d+0.5+sqrt(3)/2+1};\r=1;};
\vertex[black][-3pt][above]{6}[(\h1,\v1)]{(120:\r)};
\vertex[black][-3pt][below]{7}[(\h1,\v1)]{(-120:\r)};
\vertex[black][-3pt][above]{8}[(\h1,\v1)]{(60:\r)};
\vertex[black][-3pt][below]{9}[(\h1,\v1)]{(-60:\r)};
\vertex[black][-3pt][right]{10}[(\h1,\v1)]{(0:\r)};
\draw (v1)--(v2)--(v3)--(v4)--(v1) (v3)--(v5)--(v4) (v5)--(v6)--(v8)--(v10)--(v9)--(v7)--(v5) (v5)--(v8)--(v7)--(v6)--(v9)--(v5) (v8)--(v9) (v7)--(v10);
\node[align=center] at ({\h1-1},\v1+-2.25) {$C=A+B$};
\tikzmath{\h1=\h1+\d;\r=1;};
\vertex[black][-3pt][above]{5}[(\h1,\v1)]{(60:\r)};
\vertex[black][-3pt][below]{4}[(\h1,\v1)]{(-60:\r)};
\vertex[black][-3pt][above]{3}[(\h1,\v1)]{(120:\r)};
\vertex[black][-3pt][below]{2}[(\h1,\v1)]{(-120:\r)};
\vertex[black][-3pt][left]{1}[(\h1,\v1)]{(180:\r)};
\vertex[black][-3pt][above]{6}[(\h1,\v1)]{(1,0)};
\tikzmath{\h1={\h1+1+0.5+sqrt(3)/2};\r={sqrt(2)/2};};
\vertex[black][-3pt][above]{8}[(\h1,\v1)]{(135:\r)};
\vertex[black][-3pt][above]{9}[(\h1,\v1)]{(45:\r)};
\vertex[black][-3pt][below]{10}[(\h1,\v1)]{(-45:\r)};
\vertex[black][-3pt][below]{7}[(\h1,\v1)]{(-135:\r)};
\draw (v10)--(v9)--(v8)--(v7)--(v10) (v8)--(v6)--(v7) (v6)--(v5)--(v3)--(v1)--(v2)--(v4)--(v6) (v6)--(v3)--(v4)--(v5)--(v2)--(v6) (v3)--(v2) (v4)--(v1);
\node[align=center] at (\h1-1,\v1+-2.25) {$C_{\rev}=B_{\rev}+A_{\rev}$};
        \end{tikzpicture}
    \caption{\label{fig:Grev_attach} Examples of reversing a graph.}
\end{figure}
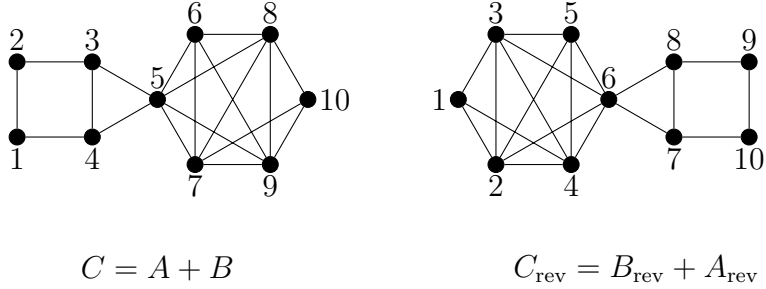

\begin{definition}
    A graph $G$ is \emph{reversible} if $G = G_{\rev}$, meaning $\{i,j\}\in E(G)$ if and only if $\{n-i+1, n-j+1\}\in E(G)$. This implies $H + G + J \cong H + (G_{\rev}) + J$ for all graphs $H, J$.
\end{definition}
\begin{example}
    Path graphs, cycle graphs, and cliques are all reversible graphs.
\end{example}

To relate $M_G$ with $M_{G_{\rev}}$, we will use the fact that for any two graphs $H_i, H_j$, \[XB_{H_i+G+(H_j)_{\rev}}(\bm x; t)=XB_{H_j+G_{\rev}+(H_i)_{\rev}}(\bm x; t).\]
Thus, given any family of graphs $\mathcal{H}=H_1,H_2,\ldots$, we can construct multiple equalities which will help relate $M_G$ to $M_{G_{\rev}}$. It turns out that so long as the family of graphs $\mathcal{H}$ satisfies certain rules, we always derive the same relation between $M_G$ and $M_{G_{\rev}}$.

\begin{definition}\label{def:rev:AB}
    A \emph{spanning family of graphs} $\mathcal{H}=H_1, H_2, \ldots$ is a sequence of connected, loopless graphs where $|H_i|=i$. Given a spanning family of graphs, we define matrices $A_{\mathcal{H}}$ and $B_{\mathcal{H}}$ where
    \[(A_{\mathcal{H}})_{i,j}=(\vec{v}M_{H_i})_j, \hspace{1cm} (B_{\mathcal{H}})_{i,j}=(M_{H_j}\vec{w}^T)_i=(M_{H_j})_{i, 1}.\]
    In particular, the $i$th row of $A_{\mathcal{H}}$ is the row vector $\vec{v}M_{H_i}$, and the $j$th column of $B_{\mathcal{H}}$ is the column vector $M_{H_j}\vec{w}^T$.
\end{definition}
\begin{remark}
    One natural reason to work with spanning families is that given any spanning family of connected simple graphs $\mathcal{H}$, the set of chromatic symmetric functions $\{X_{H_i}(\bm x)\}_i$ generates the algebra of symmetric functions \cite[Theorem~5]{chrombase}.
\end{remark}

\begin{definition}\label{def:rev:spanning_family}
    The reverse of a spanning family ${\mathcal{H}}$ is ${\mathcal{H}}_{\rev}=(H_1)_{\rev}, (H_2)_{\rev}, \ldots$. A \emph{reversible} spanning family ${\mathcal{H}}$ is one where every $H_i$ is \emph{reversible}, which implies $A_{\mathcal{H}}=A_{{\mathcal{H}}_{\rev}}$ and $B_{\mathcal{H}}=B_{{\mathcal{H}}_{\rev}}$.
\end{definition}
\begin{example}
    The sequence of cliques and paths ($H_i = K_i$ and $H_i = P_i$ respectively) form two reversible spanning families of graphs.
\end{example}

Our goal is to show the matrix $K=A_{\mathcal{H}}^TB_{{\mathcal{H}}_{\rev}}^{-1}$ is well-defined and the same for any spanning family $\mathcal{H}$.
\begin{proposition}\label{prop:rev:A_HB_Hrevsymm}
    For any spanning family $\mathcal{H}$, the matrix $A_{\mathcal{H}}B_{{\mathcal{H}}_{\rev}}$ is symmetric. If ${\mathcal{H}}$ is reversible, then $A_{\mathcal{H}}B_{\mathcal{H}}$ is symmetric.
\end{proposition}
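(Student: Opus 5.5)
We compute the $(i,j)$ entry of $A_{\mathcal H}B_{\mathcal H_{\rev}}$ directly and identify it with the Tutte symmetric function of a glued graph, whose invariance under reversal gives the symmetry. By Definition~\ref{def:rev:AB},
\[
(A_{\mathcal H}B_{\mathcal H_{\rev}})_{i,j}=\sum_k (\vec v M_{H_i})_k\,(M_{(H_j)_{\rev}})_{k,1}=\vec v\,M_{H_i}M_{(H_j)_{\rev}}\,\vec w^T .
\]
By Proposition~\ref{prop:ftmult}, $M_{H_i}M_{(H_j)_{\rev}}=M_{H_i+(H_j)_{\rev}}$. Proposition~\ref{prop:xmatrix} then gives
\[
(A_{\mathcal H}B_{\mathcal H_{\rev}})_{i,j}=XB_{H_i+(H_j)_{\rev}}(\bm x;t).
\]

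The same computation with $i$ and $j$ swapped gives $(A_{\mathcal H}B_{\mathcal H_{\rev}})_{j,i}=XB_{H_j+(H_i)_{\rev}}(\bm x;t)$. Using the reversal identity for sums,
\[
\bigl(H_i+(H_j)_{\rev}\bigr)_{\rev}=\bigl((H_j)_{\rev}\bigr)_{\rev}+(H_i)_{\rev}=H_j+(H_i)_{\rev},
\]
since reversing twice is the identity. A graph and its reverse are isomorphic, because reversal is just relabelling of vertices. Since $XB$ is an isomorphism invariant, the two entries agree, so $A_{\mathcal H}B_{\mathcal H_{\rev}}$ is symmetric.

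For a reversible family we have $\mathcal H_{\rev}=\mathcal H$ termwise, so $B_{\mathcal H_{\rev}}=B_{\mathcal H}$ as in Definition~\ref{def:rev:spanning_family}. The second claim then follows from the first.

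The main obstacle is checking that the infinite matrix products are legitimate and associative. Each $M_{H}$ has columns with only finitely many nonzero entries by Proposition~\ref{prop:ftmatrixzeroes}, and $\vec w^T$ has finite support. Hence $M_{H_i}M_{(H_j)_{\rev}}\vec w^T$ is a finite-support column, so $\vec v$ applied to it is a finite sum. This justifies regrouping the sum over $k$ as $\vec v(M_{H_i}M_{(H_j)_{\rev}})\vec w^T$. Each entry involves only finitely many nonzero terms, so the rearrangement is valid.
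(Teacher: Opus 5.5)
Your proposal is correct and follows the paper's proof: you identify the $(i,j)$ entry as $\vec v M_{H_i}M_{(H_j)_{\rev}}\vec w^T=XB_{H_i+(H_j)_{\rev}}(\bm x;t)$ via Proposition~\ref{prop:ftmult} and Proposition~\ref{prop:xmatrix}, then conclude using $(H_i+(H_j)_{\rev})_{\rev}=H_j+(H_i)_{\rev}$ and isomorphism invariance, and you deduce the reversible case from $B_{\mathcal H_{\rev}}=B_{\mathcal H}$. Your check that every infinite product reduces to a finite sum is a valid detail that the paper leaves implicit.
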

\begin{proof}
    Note $XB_{H_i+(H_j)_{\rev}}(\bm x; t) = XB_{(H_i+(H_j)_{\rev})_{\rev}}(\bm x; t) = XB_{H_j + (H_i)_{\rev}}(\bm x; t)$, and
    \[(A_{\mathcal{H}}B_{{\mathcal{H}}_{\rev}})_{i,j}=(\vec{v}M_{H_i})(M_{{H_j}_{\rev}}\vec{w}^T)=XB_{H_i+(H_j)_{\rev}}(\bm x; t)=XB_{H_j + (H_i)_{\rev}}(\bm x; t) = (A_{\mathcal{H}}B_{{\mathcal{H}}_{\rev}})_{j,i}.\]
    If ${\mathcal{H}}$ is reversible then $\mathcal{H}_{\rev}=\mathcal{H}$ so we get $A_{\mathcal{H}}B_{\mathcal{H}}$ is symmetric.
\end{proof}
\begin{comment}
\begin{proposition}
    For any spanning family $H$, the upper left $m \times m$ matrix $(A_H)_m$ is invertible
\end{proposition}
\begin{proof}
    \textcolor{red}{todo}
\end{proof}
\end{comment}
\begin{proposition}\label{prop:rev:Binvertible}
    For any spanning family ${\mathcal{H}}$, the matrix $B_{\mathcal{H}}$ is upper triangular and invertible. In particular, there exists an upper triangular matrix $B_{\mathcal{H}}^{-1}$ where $B_{\mathcal{H}}\cdot B_{\mathcal{H}}^{-1} = B_{\mathcal{H}}^{-1}\cdot B_{\mathcal{H}} = I$.
\end{proposition}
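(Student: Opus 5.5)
The plan is to read off the entries of $B_{\mathcal H}$ from Proposition~\ref{prop:ftmatrixzeroes} together with a direct computation of the boundary entry, and then invert the resulting triangular matrix column by column.

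\textbf{Upper triangularity.} By definition $(B_{\mathcal H})_{i,j}=(M_{H_j})_{i,1}$, where $|H_j|=j$. Applying Proposition~\ref{prop:ftmatrixzeroes} to $H_j$ with column index $1$ gives $(M_{H_j})_{i,1}=0$ whenever $i\geq j+1$. Hence $B_{\mathcal H}$ is upper triangular, and each of its columns has finitely many nonzero entries.

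\textbf{Diagonal entries.} The diagonal entry $(B_{\mathcal H})_{j,j}=(M_{H_j})_{j,1}$ is a sum over $\ST^{(j)}(H_j+P_1)=\ST^{(j)}(H_j)$.
\begin{itemize}
\item Since $\alpha_1=j=|H_j|$, the component containing vertex $1$ must be all of $H_j$.
\item Its composition must therefore be $\alpha=(j)$, with $r=1$. The condition $\alpha_\ell\geq 1$ is automatic.
\item The reduced type is then empty, so $e_{\type'}=1$.
\end{itemize}
The remaining choice is a connected spanning subgraph $S$ of $H_j$. The sign is $(-1)^{j-1}$ and the weight is $t^{|E(S)|}$. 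Thus
\[(B_{\mathcal H})_{j,j}=(-1)^{j-1}\sum_{S}t^{|E(S)|},\]
where the sum runs over connected spanning subgraphs $S$ of $H_j$.

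This sum is nonzero because $H_j$ is connected, so at least one such $S$ exists. Its lowest-degree term is $N\, t^{j-1}$, where $N\geq 1$ is the number of spanning trees of $H_j$. So the diagonal entry is a nonzero polynomial in $t$ (for $j=1$ it is $1$). I expect this to be the main point needing care: the diagonal entry is not obviously a unit. The resolution is to work over the field of fractions $\mathbb{Q}(t)$, or over the symmetric-function ring tensored with $\mathbb{Q}(t)$. There each diagonal entry is invertible, being a nonzero scalar independent of the $x$ variables.

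\textbf{Constructing the inverse.} With the diagonal entries invertible, I construct $B_{\mathcal H}^{-1}$ column by column through back substitution, which gives an upper triangular $C$ with $BC=I$. Because every column of an upper triangular matrix is finite, all products involve only finite sums. The same recursion gives an upper triangular $C'$ with $C'B=I$. Then
\[C'=C'(BC)=(C'B)C=C,\]
where associativity is valid because every sum involved is finite in the triangular setting.
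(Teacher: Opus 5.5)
Your proof is correct and follows the paper's argument: upper triangularity comes from Proposition~\ref{prop:ftmatrixzeroes} with $j=1$, and the diagonal entry $(M_{H_i})_{i,1}=(-1)^{i-1}\sum_S t^{|E(S)|}$, summed over connected spanning subgraphs $S$ of $H_i$, is nonzero because $H_i$ is connected. You also supply details the paper leaves implicit: these entries are nonzero but need not be units over $\mathbb{Z}[t]$ (e.g.\ $t^2(t+3)$ for $H_3=K_3$), so one must pass to coefficients in $\mathbb{Q}(t)$, and your back-substitution and finite-sum associativity argument produce a genuine two-sided upper triangular inverse.
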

\begin{proof}
    Given $G$ has $n$ vertices then $(M_G)_{i,1}=0$ if $i\geq n+1$ by Proposition~\ref{prop:ftmatrixzeroes}. Since $H_j$ has $j$ vertices, then $(B_{\mathcal{H}})_{i,j}=(M_{H_j})_{i,1}=0$ if $i\geq j+1$, so $B_{\mathcal{H}}$ is upper triangular. To show $B_{\mathcal{H}}$ is invertible it suffices to show $(B_{\mathcal{H}})_{i,i}\neq 0$ for all $i$. Note
    \[(B_{\mathcal{H}})_{i,i}=(M_{H_i})_{i,1}=\sum_{\substack{S \subseteq H_i\\\text{$S$ has 1 connected component}}}(-1)^{i-1}\cdot t^{E(S)}.\]
    Since $H_i$ is connected, there exists at least one such subgraph, meaning the sum is non-empty and $(B_{\mathcal{H}})_{i,i}\neq 0$.
\end{proof}

\begin{proposition}
    The matrix $K=A_{\mathcal{H}}^TB_{{\mathcal{H}}_{\rev}}^{-1}$ is symmetric and the same for any spanning family ${\mathcal{H}}$.
\end{proposition}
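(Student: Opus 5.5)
The plan is to derive everything from one "mixed" version of Proposition~\ref{prop:rev:A_HB_Hrevsymm}, which relates two different spanning families, together with the triangularity from Proposition~\ref{prop:rev:Binvertible}. For spanning families $\mathcal{H}$ and $\mathcal{H}'$, write $K_{\mathcal{H}}=A_{\mathcal{H}}^TB_{\mathcal{H}_{\rev}}^{-1}$.

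\emph{Step 1 (mixed identity).} Copying the proof of Proposition~\ref{prop:rev:A_HB_Hrevsymm} with two families, I would check that
\[(A_{\mathcal{H}}B_{\mathcal{H}'_{\rev}})_{i,j}=\vec v M_{H_i}M_{(H'_j)_{\rev}}\vec w^T=XB_{H_i+(H'_j)_{\rev}}(\bm x;t).\]
Next I would use that $(H_i+(H'_j)_{\rev})_{\rev}=H'_j+(H_i)_{\rev}$, that $XB$ is invariant under isomorphism, and that $G\cong G_{\rev}$. These give
\[XB_{H_i+(H'_j)_{\rev}}(\bm x;t)=XB_{H'_j+(H_i)_{\rev}}(\bm x;t)=(A_{\mathcal{H}'}B_{\mathcal{H}_{\rev}})_{j,i}.\]
In matrix form this reads
\[A_{\mathcal{H}}B_{\mathcal{H}'_{\rev}}=(A_{\mathcal{H}'}B_{\mathcal{H}_{\rev}})^T=B_{\mathcal{H}_{\rev}}^TA_{\mathcal{H}'}^T .\]
Taking $\mathcal{H}'=\mathcal{H}$ recovers Proposition~\ref{prop:rev:A_HB_Hrevsymm}.

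\emph{Step 2 (isolate $K$).} Proposition~\ref{prop:rev:Binvertible} makes $B_{\mathcal{H}_{\rev}}$ and $B_{\mathcal{H}'_{\rev}}$ upper triangular with upper triangular two-sided inverses, since $\mathcal{H}_{\rev}$ and $\mathcal{H}'_{\rev}$ are again spanning families. I would multiply the mixed identity on the left by $(B_{\mathcal{H}_{\rev}}^{-1})^T$ and on the right by $B_{\mathcal{H}'_{\rev}}^{-1}$. This gives
\[(B_{\mathcal{H}_{\rev}}^{-1})^TA_{\mathcal{H}}=A_{\mathcal{H}'}^TB_{\mathcal{H}'_{\rev}}^{-1}=K_{\mathcal{H}'}.\]
The left side is exactly $K_{\mathcal{H}}^T$. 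So $K_{\mathcal{H}}^T=K_{\mathcal{H}'}$ for every pair of spanning families. Putting $\mathcal{H}'=\mathcal{H}$ gives $K_{\mathcal{H}}^T=K_{\mathcal{H}}$, which is the symmetry claim. Then for general $\mathcal{H}'$ we get $K_{\mathcal{H}'}=K_{\mathcal{H}}^T=K_{\mathcal{H}}$, which is the independence claim.

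\emph{Step 3 (legitimacy of infinite products).} I expect this to be the main obstacle: all of the above manipulations involve infinite matrices, so every product must be a finite sum and associativity must hold. The entries of $A_{\mathcal{H}}$ are finite sums because the columns of $M_{H_i}$ have finite support (Proposition~\ref{prop:ftmatrixzeroes}). In $A_{\mathcal{H}}^TB^{-1}$, the inner index $k$ is bounded by the column index because $B^{-1}$ is upper triangular. In $(B^{-1})^TA_{\mathcal{H}}$, the inner index is bounded by the row index because $(B^{-1})^T$ is lower triangular.

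The critical case is the triple product $(B_{\mathcal{H}_{\rev}}^{-1})^T\,A_{\mathcal{H}}\,B_{\mathcal{H}'_{\rev}}^{-1}$, whose $(p,q)$ entry is
\[\sum_{k\le p,\ l\le q}\big((B_{\mathcal{H}_{\rev}}^{-1})^T\big)_{p,k}\,(A_{\mathcal{H}})_{k,l}\,\big(B_{\mathcal{H}'_{\rev}}^{-1}\big)_{l,q}.\]
This is a finite double sum, so either bracketing gives the same value. The same check applies to the products with $B_{\mathcal{H}_{\rev}}^{-1}B_{\mathcal{H}_{\rev}}=I$ used in Step 2. Hence the formal cancellations are valid.
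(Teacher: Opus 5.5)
Your proof is correct and takes essentially the same route as the paper: both rest on the mixed identity $A_{\mathcal{H}}B_{\mathcal{J}_{\rev}}=B_{\mathcal{H}_{\rev}}^TA_{\mathcal{J}}^T$, which comes from $XB_{H_i+(J_j)_{\rev}}=XB_{J_j+(H_i)_{\rev}}$, and then cancel the invertible triangular $B$ matrices. The differences are minor. You get $K_{\mathcal{H}}^T=K_{\mathcal{H}'}$ in one step, which yields symmetry and independence together, whereas the paper proves symmetry first and then uses it in the independence step. You also add the finiteness and associativity check for the infinite products, which the paper leaves implicit.
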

\begin{proof}
    By Proposition~\ref{prop:rev:A_HB_Hrevsymm} and Proposition~\ref{prop:rev:Binvertible}, we have $(B_{{\mathcal{H}}_{\rev}})^TA_{\mathcal{H}}^T=A_{\mathcal{H}}B_{{\mathcal{H}}_{\rev}}$, which implies $A_{\mathcal{H}}^T=(B_{{\mathcal{H}}_{\rev}}^T)^{-1}A_{\mathcal{H}}^TB_{{\mathcal{H}}_{\rev}}$. Then,
    \[K=A_{\mathcal{H}}^TB_{{\mathcal{H}}_{\rev}}^{-1}=(B_{{\mathcal{H}}_{\rev}}^T)^{-1}A_{\mathcal{H}}^TB_{{\mathcal{H}}_{\rev}}B_{{\mathcal{H}}_{\rev}}^{-1}=(B_{{\mathcal{H}}_{\rev}}^T)^{-1}A_{\mathcal{H}}^T=K^T\]
    is symmetric. Let ${\mathcal{H}}$ and ${\mathcal{J}}$ be two spanning families, note \[(A_{\mathcal{H}}B_{{\mathcal{J}}_{\rev}})_{i,j}=XB_{H_i+(J_j)_{\rev}}(\bm x; t)=XB_{J_j+(H_i)_{\rev}}(\bm x; t)=(A_{\mathcal{J}}B_{{\mathcal{H}}_{\rev}})_{j,i},\]
    meaning $A_{\mathcal{H}}B_{{\mathcal{J}}_{\rev}}=B_{{\mathcal{H}}_{\rev}}^TA_{\mathcal{J}}^T$ and $A_{\mathcal{H}}=B^T_{{\mathcal{H}}_{\rev}}A^T_{\mathcal{J}}B_{{\mathcal{J}}_{\rev}}^{-1}$. Then,
    \[K=A_{\mathcal{H}}^TB^{-1}_{{\mathcal{H}}_{\rev}}=(B_{{\mathcal{J}}_{\rev}}^T)^{-1}A_{\mathcal{J}}B_{{\mathcal{H}}_{\rev}}B_{{\mathcal{H}}_{\rev}}^{-1}=(B^T_{{\mathcal{J}}_{\rev}})^{-1}A_{\mathcal{J}}=A_{\mathcal{J}}^TB_{{\mathcal{J}}_{\rev}}^{-1},\]
    where the last step uses the fact that $A_{\mathcal{J}}^TB_{{\mathcal{J}}_{\rev}}^{-1}$ is symmetric. Thus, for any pair of spanning families ${\mathcal{H}}$ and ${\mathcal{J}}$, the resulting $K$ matrix is the same.
\end{proof}

\begin{theorem}
    For any graph $G$ and its reverse $G_{\rev}$, we have
    \[KM_{G_{\rev}}=M_G^TK.\]
\end{theorem}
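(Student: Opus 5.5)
We prove $KM_{G_{\rev}}=M_G^TK$ by choosing a convenient spanning family and reducing the identity to the symmetry relations already established. Fix any spanning family $\mathcal{H}$; we are free to take a reversible one such as the paths $P_i$. Then $\mathcal{H}_{\rev}=\mathcal{H}$ and $K=A_{\mathcal{H}}^TB_{\mathcal{H}}^{-1}$, and by symmetry of $K$ also $K=(B_{\mathcal{H}}^T)^{-1}A_{\mathcal{H}}$. Multiplying the target identity on the left by $B_{\mathcal{H}}^T$ and on the right by $B_{\mathcal{H}}$, it becomes
\[
A_{\mathcal{H}}M_{G_{\rev}}B_{\mathcal{H}} \;=\; B_{\mathcal{H}}^TM_G^TA_{\mathcal{H}}^T \;=\; \left(A_{\mathcal{H}}M_GB_{\mathcal{H}}\right)^T .
\]
Since $B_{\mathcal{H}}$ is invertible by Proposition~\ref{prop:rev:Binvertible}, this equation is equivalent to the theorem. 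Concretely, we substitute $K=A_{\mathcal{H}}^TB_{\mathcal{H}}^{-1}$ on the left side of the theorem and $K=(B_{\mathcal{H}}^T)^{-1}A_{\mathcal{H}}$ on the right side, and compare.

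Next we identify the entries combinatorially. By Definition~\ref{def:rev:AB} and Proposition~\ref{prop:ftmult},
\[
(A_{\mathcal{H}}M_{G}B_{\mathcal{H}})_{i,j}=\vec vM_{H_i}M_GM_{H_j}\vec w^T=\vec vM_{H_i+G+H_j}\vec w^T=XB_{H_i+G+H_j}(\bm x;t),
\]
where the last equality is Proposition~\ref{prop:xmatrix}. Similarly $(A_{\mathcal{H}}M_{G_{\rev}}B_{\mathcal{H}})_{i,j}=XB_{H_i+G_{\rev}+H_j}$. Using $(H_j+G+H_i)_{\rev}=(H_i)_{\rev}+G_{\rev}+(H_j)_{\rev}=H_i+G_{\rev}+H_j$ (reversibility of the family) and the fact that reversal is a graph isomorphism, we get $XB_{H_i+G_{\rev}+H_j}=XB_{H_j+G+H_i}$. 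This is exactly the $(j,i)$ entry of $A_{\mathcal{H}}M_GB_{\mathcal{H}}$, which proves the displayed equation.

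The main obstacle is justifying the manipulations with infinite matrices. Products such as $A_{\mathcal{H}}M_G$ must converge, and associativity is needed whenever inverses are moved across products. Columns of each $M$ have finitely many nonzero entries by Proposition~\ref{prop:ftmatrixzeroes}, and $B_{\mathcal{H}}$ and $B_{\mathcal{H}}^{-1}$ are upper triangular. A cleaner route is to avoid $K$-manipulations where possible and check associativity in the order $A_{\mathcal{H}}^T\cdot(B_{\mathcal{H}}^{-1}M_{G_{\rev}}B_{\mathcal{H}})\cdot B_{\mathcal{H}}^{-1}$. Each intermediate product involves either a column-finite matrix on the right or an upper triangular matrix, so every sum is finite; we would verify this entry by entry. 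If this becomes delicate, the fallback is to grade by degree: entries of $(M_G)_{i,j}$ are homogeneous of degree $n+j-1-i$, so each entry identity reduces to a finite computation in a fixed degree.
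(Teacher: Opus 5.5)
Your proposal is correct and follows the paper's proof essentially verbatim: take $H_i=P_i$, identify $(A_{\mathcal{H}}M_{G_{\rev}}B_{\mathcal{H}})_{i,j}=XB_{P_i+G_{\rev}+P_j}(\bm x;t)=XB_{P_j+G+P_i}(\bm x;t)=(A_{\mathcal{H}}M_GB_{\mathcal{H}})_{j,i}$, and cancel $B_{\mathcal{H}}$ using its invertibility; your remarks on finiteness of the infinite products go slightly beyond what the paper checks. One small slip: for the cancellation to close you need $K=(B_{\mathcal{H}}^T)^{-1}A_{\mathcal{H}}$ on the left side $KM_{G_{\rev}}$ and $K=A_{\mathcal{H}}^TB_{\mathcal{H}}^{-1}$ on the right side $M_G^TK$, which is the reverse of the assignment in your ``Concretely'' sentence.
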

\begin{proof}
    Let $H_i=P_i$ for simplicity, meaning ${\mathcal{H}}={\mathcal{H}}_{\rev}$, then
    \begin{align*}(A_{\mathcal{H}}M_{G_{\rev}}B_{\mathcal{H}})_{i,j}&=\vec{v}M_{P_i}M_{G_{\rev}}M_{P_j}\vec{w}^T=XB_{P_i+G_{\rev}+P_j}(\bm x; t)\\
    &=XB_{P_j+G+P_i}(\bm x; t)=(A_{\mathcal{H}}M_{G}B_{\mathcal{H}})_{j,i},\end{align*}
    meaning $A_{\mathcal{H}}M_{G_{\rev}}B_{\mathcal{H}} = B^T_{\mathcal{H}}M_{G}^TA_{\mathcal{H}}^T$. By Proposition~\ref{prop:rev:Binvertible}, we can invert $B_{\mathcal{H}}$ to get
    \[KM_{G_{\rev}}=(B^T_{\mathcal{H}})^{-1}A_{\mathcal{H}}M_{G_{\rev}}=M_{G}^TA_{\mathcal{H}}^TB_{\mathcal{H}}^{-1}=M_{G}^TK,\]
    which proves the theorem.
\end{proof}

Using $H_i=P_i$, we will now calculate $A_{\mathcal{H}}$ and $B_{\mathcal{H}}^{-1}$ to find $K$.

\begin{proposition}
    For $H_i=P_i$, then 
    \[(B_{\mathcal{H}}^{-1})_{i,j}=\begin{cases}
        (-t)^{-i}(i-j-t)e_{j-i}&i\leq j \\
        0 &i > j
    \end{cases}.\]
\end{proposition}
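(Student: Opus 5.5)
The plan is to verify directly that the proposed upper triangular matrix, call it $C$ with $C_{i,j}=(-t)^{-i}(i-j-t)e_{j-i}$ for $i\le j$ and $0$ otherwise, satisfies $B_{\mathcal H}C=I$. By Proposition~\ref{prop:rev:Binvertible}, $B_{\mathcal H}$ is upper triangular with nonzero diagonal, so its upper triangular inverse is unique. A one-sided inverse among upper triangular matrices therefore suffices: each entry of a product of upper triangular matrices is a finite sum, so the usual argument applies.

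\textbf{Step 1: entries of $B_{\mathcal H}$.} By definition $(B_{\mathcal H})_{i,k}=(M_{P_k})_{i,1}$. Proposition~\ref{prop:ftpn} with $j=1$ gives
\[(B_{\mathcal H})_{i,k}=\sum_{\alpha\models k-i}(\alpha_1+t)\cdots(\alpha_\ell+t)(-t)^{k-\ell(\alpha)-1}e_{\sort(\alpha)},\]
since the condition $\alpha_\ell\ge 1$ is vacuous. Set $m=k-i$. Then this entry equals $(-t)^{i-1}(-t)^{m}c_m$, where
\[c_m=\sum_{\alpha\models m}\prod_{p}\Bigl(-\tfrac{(\alpha_p+t)e_{\alpha_p}}{t}\Bigr),\qquad c_0=1.\]
Here the product of $e_{\alpha_p}$ is $e_{\sort(\alpha)}$, and the factor $(-t)^{-\ell}$ has been absorbed into the product.

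\textbf{Step 2: form the product.} For $i\le j$,
\[(B_{\mathcal H}C)_{i,j}=\sum_{k=i}^{j}(-t)^{i-1+(k-i)}c_{k-i}\,(-t)^{-k}\bigl(-(j-k)-t\bigr)e_{j-k}.\]
The powers of $-t$ combine to $(-t)^{-1}$, so with $r=j-k$ and $N=j-i$ this becomes
\[\frac1t\sum_{m+r=N}c_m\,(r+t)\,e_r,\qquad e_0=1.\]
So it suffices to show that $\sum_{m+r=N}c_m(r+t)e_r=t\,\delta_{N,0}$.

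\textbf{Step 3: generating functions.} Put $F(z)=\sum_{r\ge1}-\frac{(r+t)e_r}{t}z^r$. Summing over compositions as sequences of parts gives $\sum_m c_mz^m=1/(1-F(z))$. Also
\[\sum_{r\ge0}(r+t)e_rz^r=t+\sum_{r\ge1}(r+t)e_rz^r=t\,(1-F(z)).\]
The product of the two series is therefore $t$, which is exactly the required identity.

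I expect the main obstacle to be the bookkeeping in Steps 1--2: tracking the exponents of $-t$ and checking the $r=0$ term, where the coefficient $(0+t)e_0=t$ must produce the diagonal value $(B_{\mathcal H}C)_{i,i}=1$. As a check, $(B_{\mathcal H})_{i,i}=(-t)^{i-1}$ and $C_{i,i}=(-t)^{1-i}$, consistent with the formula. Once the exponents are normalized as above, the generating-function identity finishes the proof.
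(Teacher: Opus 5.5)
Your proof is correct and takes essentially the same route as the paper: you verify $B_{\mathcal{H}}C=I$ entrywise, normalize the powers of $-t$, and show the resulting sum vanishes off the diagonal. Your generating-function identity $\frac{1}{1-F(z)}\cdot t\,(1-F(z))=t$ compactly encodes the paper's step of merging the double sum over $k<j$ and $\alpha\models k-i$ into a single sum over compositions of $j-i$ (by adjoining the part $j-k$), which then cancels the $k=j$ term.
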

\begin{proof}
    Since $B_{\mathcal{H}}$ is upper triangular, its inverse is also upper triangular and showing $B_{\mathcal{H}}^{-1}B_{\mathcal{H}}=I$ implies $B_{\mathcal{H}}B_{\mathcal{H}}^{-1}=I$. Note by Proposition~\ref{prop:ftpn},
    \[(B_{\mathcal{H}})_{i,j}=(M_{H_j})_{i,1}=\sum_{\alpha \models j-i}(\alpha_1+t)\cdots(\alpha_\ell+t)(-t)^{j-\ell(\alpha)-1}e_{\sort(\alpha)},\]
    meaning
    \begin{align*}
        (B_{\mathcal{H}}B_{\mathcal{H}}^{-1})_{i,j}&=\sum_{k=i}^j\left(\sum_{\alpha\models k-i}(\alpha_1+t)\cdots(\alpha_\ell+t)(-t)^{k-\ell(\alpha)-1}e_{\sort(\alpha)}\right)\cdot (-t)^{-k}(k-j-t)e_{j-k}\\
        &=\sum_{k=i}^j\sum_{\alpha\models k-i}(\alpha_1+t)\cdots(\alpha_\ell+t)(k-j-t)(-t)^{-\ell(\alpha)-1}e_{\sort(\alpha)}e_{j-k}.
    \end{align*}
    If $i > j$ then $(B_{\mathcal{H}}B_{\mathcal{H}}^{-1})_{i,j}=0$. If $i < j$, we can combine the sum over $k$ and the sum over $\alpha\models k-i$ into a single sum over $\alpha\models j-i$ with $\alpha_1=j-k$, getting
    \begin{align*}(B_{\mathcal{H}}B_{\mathcal{H}}^{-1})_{i,j}=&\sum_{\alpha\models j-i}(\alpha_1+t)\cdots(\alpha_\ell+t)(-t)^{-\ell(\alpha)}e_{\sort(\alpha)}\\+&\sum_{k=i}^{j-1}\sum_{\alpha\models k-i}(\alpha_1+t)\cdots(\alpha_\ell+t)(k-j-t)(-t)^{-\ell(\alpha)-1}e_{\sort(\alpha)}e_{j-k}\\
    =&\sum_{\alpha\models j-i}(\alpha_1+t)\cdots(\alpha_\ell+t)(-t)^{-\ell(\alpha)}e_{\sort(\alpha)}\\
    -&\sum_{\substack{\alpha\models j-i\\j-k=\alpha_{1}}}(\alpha_1+t)\cdots (\alpha_{\ell}+t)(-t)^{-\ell(\alpha)}e_{\sort(\alpha)}
    = 0.
    \end{align*}
    Finally, if $i=j$, then the sum accounts for only the empty composition, resulting in $(B_{\mathcal{H}}B_{\mathcal{H}}^{-1})_{i,j}=1$. Thus, $B_{\mathcal{H}}B_{\mathcal{H}}^{-1}$ is the identity matrix.
\end{proof}
\begin{proposition}
    For $H_i = P_i$, then
    \[(A_{\mathcal{H}})_{i,j}=\sum_{\substack{\alpha\models i+j-1\\\alpha_\ell\geq j}}\alpha_1\cdot (\alpha_2+t)\cdots(\alpha_\ell+t)(-t)^{i-\ell(\alpha)}e_{\sort(\alpha)}.\]
\end{proposition}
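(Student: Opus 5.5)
The plan is a direct computation from Proposition~\ref{prop:ftpn} together with the definition $(A_{\mathcal{H}})_{i,j}=(\vec v M_{P_i})_j=\sum_{k\geq 1}ke_k(M_{P_i})_{k,j}$. This sum is finite: by Proposition~\ref{prop:ftmatrixzeroes}, $(M_{P_i})_{k,j}=0$ for $k\geq i+j$.

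First, substitute the formula of Proposition~\ref{prop:ftpn} with $n=i$:
\[
(A_{\mathcal{H}})_{i,j}=\sum_{k= 1}^{i+j-1}ke_k\sum_{\substack{\beta\models i+j-k-1\\ \beta_\ell\geq j}}(\beta_1+t)\cdots(\beta_\ell+t)(-t)^{i-\ell(\beta)-1}e_{\sort(\beta)}.
\]
The term $k=i+j-1$ is included through the empty composition $\beta$, which contributes $(-t)^{i-1}$.

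Second, merge the double sum into a single sum over compositions $\alpha=k\cdot\beta\models i+j-1$, with $\alpha_1=k$ and $\alpha_2\cdots\alpha_\ell=\beta$. This is the same reindexing used in the proof of Equation~\eqref{eq:xbpn}. Under it:
\begin{itemize}
\item $ke_ke_{\sort(\beta)}=\alpha_1e_{\sort(\alpha)}$;
\item the product $(\beta_1+t)\cdots(\beta_\ell+t)$ becomes $(\alpha_2+t)\cdots(\alpha_\ell+t)$;
\item since $\ell(\alpha)=\ell(\beta)+1$, the exponent $i-\ell(\beta)-1$ becomes $i-\ell(\alpha)$.
\end{itemize}
This gives exactly the summand in the statement.

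The only step needing care, and the expected main obstacle, is the boundary condition on the last part. When $\beta$ is nonempty, the condition $\beta_\ell\geq j$ becomes $\alpha_\ell\geq j$. When $\beta$ is empty, $\alpha=(i+j-1)$ has a single part. Its last part is $\alpha_1=i+j-1\geq j$ because $i\geq 1$, so the condition $\alpha_\ell\geq j$ holds automatically and this term is also captured. Conversely, every $\alpha\models i+j-1$ with $\alpha_\ell\geq j$ arises from a unique pair $(k,\beta)$. If $\ell(\alpha)=1$, this is $k=i+j-1$ with $\beta$ empty; otherwise $\beta$ is nonempty with $\beta_\ell=\alpha_\ell\geq j$. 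Hence the reindexing is a bijection and the formula follows.

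As a sanity check, setting $j=1$ should recover $XB_{P_{i}}$ from Equation~\eqref{eq:xbpn}, since the first column of $A_{\mathcal{H}}$ is $\vec vM_{P_i}\vec w^T$.
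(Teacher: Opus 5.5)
Your proposal is correct and takes the same route as the paper. You expand $(\vec v M_{P_i})_j=\sum_k ke_k(M_{P_i})_{k,j}$ using Proposition~\ref{prop:ftpn}, then absorb $k$ as the first part $\alpha_1$ of a composition of $i+j-1$; your explicit handling of the empty-composition term at $k=i+j-1$, which the paper leaves implicit, is welcome extra care.
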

\begin{proof}
Using Definition~\ref{def:rev:AB}, $(A_{\mathcal{H}})_{i,j}=(\vec{v}M_{P_i})_j$ meaning
\begin{align*}
    (A_{\mathcal{H}})_{i,j}=&\sum_k \vec{v}_k \cdot (M_{P_i})_{k,j}\\
    &=\sum_k \left(k e_k \cdot \sum_{\substack{\alpha\models i+j-k-1\\\alpha_\ell\geq j}}(\alpha_1+t)\cdots(\alpha_\ell+t)(-t)^{i-\ell(\alpha)-1}e_{\sort(\alpha)}\right)\\
    &=\sum_{\substack{\alpha \models i+j-1\\\alpha_{\ell}\geq j}}\alpha_1 (\alpha_2+t)\cdots(\alpha_{\ell}+t)(-t)^{i-\ell(\alpha)}e_{\sort(\alpha)}.
\end{align*}
\end{proof}
\begin{theorem}
    The matrix $K$ has $K_{i,j}=\frac{X_{K_i+K_j}(\bm x)}{(i-1)!(j-1)!}$.
\end{theorem}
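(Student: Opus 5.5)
We want to show $K_{i,j}=X_{K_i+K_j}(\bm x)/((i-1)!(j-1)!)$, where $K=A_{\mathcal H}^TB_{\mathcal H_{\rev}}^{-1}$ is independent of the spanning family. The plan is to avoid computing $A_{P}^TB_P^{-1}$ directly. Instead we choose a convenient spanning family and use the identity $K B_{\mathcal H_{\rev}}=A_{\mathcal H}^T$. Equivalently, since $K$ is symmetric, we can use the defining relation in the form $(A_{\mathcal H}B_{\mathcal J_{\rev}})_{i,j}=XB_{H_i+(J_j)_{\rev}}$.

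\textbf{Choice of family.} Take $\mathcal H=\mathcal J$ to be the cliques $H_i=K_i$, which are reversible. Then $K=A^T B^{-1}$ with $A=A_{\mathcal K}$ and $B=B_{\mathcal K}$. It suffices to verify $KB=A^T$ with the proposed $K$. Here $A$ and $B$ are built from $M_{K_i}$, so everything reduces to understanding $M_{K_i}\vec w^T$ and $\vec v M_{K_i}$.

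\textbf{Key observation.} The claimed $K$ does not depend on $t$, so $K$ can be detected at any value of $t$ for which $B$ remains invertible. Rather than substituting $t=-1$ in the matrices (legitimate since $K$ is determined by polynomial identities, as long as $\det$ of the leading blocks of $B$ is nonzero at $t=-1$), I would argue that the defining identity $A_{\mathcal H}^T=KB_{\mathcal H}$ is polynomial in $t$. The diagonal of $B_{\mathcal K}$ at $t=-1$ is $(-1)^{i-1}\sum_{S \text{ connected spanning}}(-1)^{|E(S)|}$. This is $\pm$ a Tutte-type evaluation that equals $\pm (i-1)!$ for $K_i$ (the count of connected spanning subgraphs weighted by sign is $(-1)^{i-1}(i-1)!$). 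It is nonzero, so we may specialize to $t=-1$, where $XB=X$.

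\textbf{Computing at $t=-1$.} At $t=-1$, write $b_j=(B)_{\cdot,j}=M_{K_j}\vec w^T$ and $a_i=\vec vM_{K_i}$. Then $(A B)_{i,j}=X_{K_i+K_j}$, and we need $\sum_k K_{i,k}B_{k,j}=A_{j,i}$, i.e.\ $K=A^TB^{-1}$. The cleanest route is to show that $B$ is essentially diagonal at $t=-1$ for cliques: $(M_{K_j})_{k,1}=0$ for $k<j$, with $(M_{K_j})_{j,1}=(j-1)!$ up to sign. This mirrors the chromatic case in \cite{gluesinglevertex}, where $M_{K_j}$ has a simple form because all vertices of a clique must receive distinct colors. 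Granting this, $K_{i,j}=A_{j,i}/B_{i,i}$. Then, using symmetry of $AB$ together with the diagonal form of $B$, we get $(AB)_{i,j}=A_{i,j}(j-1)!=X_{K_i+K_j}$. Hence $A_{i,j}=X_{K_i+K_j}/(j-1)!$, and dividing by $(i-1)!$ gives the formula.

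\textbf{Main obstacle.} The main difficulty is establishing the diagonal form of $M_{K_j}\vec w^T$ at $t=-1$. I would attempt this via a sign-reversing involution on $\ST^{(k)}(K_j+P_1)$, or by citing the chromatic computation from \cite{gluesinglevertex}. If the column is not diagonal, the fallback is to verify $KB=A^T$ directly using the path family, where the formulas for $A$ and $B^{-1}$ proved above are explicit. That would reduce the result to a composition-sum identity at $t=-1$, matched against a known $e$-expansion of $X_{K_i+K_j}$.
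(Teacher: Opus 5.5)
\textbf{The gap: you only prove the statement at $t=-1$.} The matrix $K=A_{\mathcal H}^TB_{\mathcal H_{\rev}}^{-1}$ is defined with $t$ as a free parameter, and a priori its entries are rational in $t$ (for paths, $B_{\mathcal H}^{-1}$ involves $(-t)^{-i}$). The theorem asserts the formula for every $t$. In the paper, the fact that all $t$'s cancel is an \emph{output} of a computation done at general $t$, not an input. Your ``key observation'' uses the $t$-freeness of the \emph{claimed} matrix to justify reading $K$ off at $t=-1$, which presupposes what is to be proved. Likewise, knowing that $A_{\mathcal H}^T-K_{\mathrm{claim}}B_{\mathcal H}$ is polynomial in $t$ and vanishes at $t=-1$ does not make it vanish identically. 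Your fallback (paths, again at $t=-1$) has the same defect.

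The $t=-1$ part itself is sound. The diagonality you ``grant'' is true. Group $S\subseteq E(K_j)$ by the vertex set $U\ni 1$ of the component of vertex $1$, and use three facts (with $h_m$ the complete homogeneous symmetric function): $\sum_{\alpha\models c,\ \alpha_1=k}(-1)^{c-\ell(\alpha)}e_{\sort(\alpha\setminus\alpha_1)}=(-1)^{k-1}h_{c-k}$, $\sum_{S\text{ conn.\ spanning in }K_u}(-1)^{|S|}=(-1)^{u-1}(u-1)!$, and $X_{K_m}=m!\,e_m$. This gives $(M_{K_j})_{k,1}|_{t=-1}=(j-1)!\sum_{s=0}^{j-k}(-1)^sh_se_{j-k-s}=(j-1)!\,\delta_{k,j}$. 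Then $K_{i,j}=A_{j,i}/B_{j,j}$ (not $/B_{i,i}$), together with $(A_{\mathcal K}B_{\mathcal K})_{j,i}=A_{j,i}(i-1)!$, yields $X_{K_i+K_j}/((i-1)!(j-1)!)$ at $t=-1$.

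\textbf{How to close the gap.} You need an independent reason why $K$ is $t$-free; here is one. For $S\subseteq E(G)$, let $c(S)$ be the size of the component of $(V(G),S)$ containing $n$, and let $p_{\lambda^-(S)}$ be the product of $p_{|C|}$ over the remaining components. Expand via $XB=\sum_S t^{|S|}p_{\lambda(S)}$ exactly as in the proof of the subgraph-triple formula; the $j-1$ edges of $P_j$ absorb the factor $t^{1-j}$. This gives
\[M_{G_{\rev}}\vec w^T=\sum_{S\subseteq E(G)}t^{|S|}p_{\lambda^-(S)}\,\beta_{c(S)},\qquad (\vec vM_G)^T=\sum_{S\subseteq E(G)}t^{|S|}p_{\lambda^-(S)}\,\gamma_{c(S)},\]
where the column vectors $(\beta_c)_i=(-1)^{i-1}h_{c-i}$ and $(\gamma_c)_j=\sum_{\alpha\models c+j-1,\ \alpha_\ell\geq j}(-1)^{c-\ell(\alpha)}\alpha_1e_{\sort(\alpha)}$ do not involve $t$. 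Since $\beta_c$ is supported on $i\leq c$ with $(\beta_c)_c=(-1)^{c-1}$, there is a unique $t$-free matrix $K_0$ with $K_0\beta_c=\gamma_c$ for all $c$. It satisfies $K_0B_{\mathcal H_{\rev}}=A_{\mathcal H}^T$, so $K=K_0$ is $t$-free, and your $t=-1$ evaluation then determines $K$.

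With this lemma added, your argument becomes a complete proof that genuinely differs from the paper's. The paper computes $A_{\mathcal H}^TB_{\mathcal H}^{-1}$ for paths by composition-sum manipulations at general $t$, then matches the result against the known $e$-expansion of $X_{K_i+K_j}$. You instead get the formula directly from $(A_{\mathcal K}B_{\mathcal K})_{i,j}=X_{K_i+K_j}$ and the diagonality of $B_{\mathcal K}$ at $t=-1$, with no external expansion needed.
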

\begin{proof}
    Note $K=A_{\mathcal{H}}^T(B_{{\mathcal{H}}}^{-1})$ when $H_i=P_i$, so $K_{i,j}$ is
    \begin{align*}
        K_{i,j}=&\sum_k (A_{\mathcal{H}})_{k,i}(B_{\mathcal{H}}^{-1})_{k,j}\\
        =&\sum_{k=1}^j \sum_{\substack{\alpha \models k+i-1\\\alpha_{\ell}\geq i}}\alpha_1(\alpha_2+t)\cdots (\alpha_{\ell}+t)(-t)^{k-\ell(\alpha)}e_{\sort(\alpha)}\cdot (-t)^{-k}(k-j-t)e_{j-k}\\
        =&\sum_{k=1}^{j-1}\sum_{\substack{\alpha \models k+i-1\\\alpha_{\ell}\geq i}}-\alpha_1(\alpha_2+t)\cdots(\alpha_{\ell}+t)(-t)^{-\ell(\alpha)}(j-k+t)e_{\sort(\alpha)}e_{j-k} \\
        +&\sum_{\substack{\alpha\models j+i-1\\\alpha_{\ell}\geq i}}\alpha_1(\alpha_2+t)\cdots(\alpha_{\ell}+t)(-t)^{1-\ell(\alpha)}e_{\sort(\alpha)}.
    \end{align*}
    The next step is to transform the double sum over $k$ and $\alpha\models k+i-1$ into a single sum over $\alpha'\models j+i-1$. To do this, we look at $\alpha\models k+i-1$ where $\len(\alpha)=1$ and $\len(\alpha)\geq 2$ separately, meaning the new composition $\alpha' \models j+i-1$ has either $\len(\alpha')=2$ (where $\alpha'_1=j-k$) or $\len(\alpha')\geq 3$ (where $\alpha'_2=j-k$), getting
    \begin{align*}
        K_{i,j}=&\sum_{\substack{\alpha \models j+i-1\\\len(\alpha)=2\\\alpha_{\ell}\geq i}}-(\alpha_1+t)\alpha_2(-t)^{1-\ell(\alpha)}e_{\sort(\alpha)} \\
        -&\sum_{\substack{\alpha \models j+i-1\\\len(\alpha)\geq 3\\\alpha_{\ell}\geq i}}\alpha_1(\alpha_2+t)\cdots (\alpha_\ell+t)(-t)^{1-\ell(\alpha)}e_{\sort(\alpha)} \\
        +&\sum_{\substack{\alpha \models j+i-1\\\alpha_{\ell}\geq i}}\alpha_1(\alpha_2+t)\cdots(\alpha_{\ell}+t)(-t)^{1-\ell(\alpha)}e_{\sort(\alpha)}\\
        =&\sum_{\substack{\alpha \models j+i-1\\\len(\alpha)=2\\\alpha_{\ell}\geq i}}\left(\alpha_1(\alpha_2+t)-(\alpha_1+t)\alpha_2\right)(-t)^{-1}e_{\sort(\alpha)} + \sum_{\substack{\alpha \models j+i-1\\\len(\alpha)=1}}\alpha_1 e_{\sort(\alpha)}\\
        =&\hspace{0.3cm}\sum_{k=i}^{i+j-1}(2k-i-j+1)e_{k}e_{i+j-k-1}\\
        =&\sum_{k=\max(i,j)}^{i+j-1}(2k-i-j+1)e_{k}e_{i+j-k-1} = \frac{X_{K_i+K_j}(\bm x)}{(i-1)!(j-1)!}.
    \end{align*}
    In the second to last line, we first replace the sum over compositions with a sum over $k$ where $k$ represents $\alpha_{\ell}$, and then combine the two sums. In the last line, we note that if $i < j$ then the terms for $k < j$ cancel out with each other. This final expression for $K_{i,j}$ equals the chromatic symmetric function of $K_i+K_j$ up to a scaling factor, calculated in \cite[Corollary~4.14]{qforesttriples}.

    Notice that all factors of $t$ cancel out and that $K_i+K_j$ is $e$-positive, meaning $K$ is a symmetric matrix with non-negative coefficients in every entry.
\end{proof}
\begin{proposition}
    If $G$ is reversible, then $KM_G=M_{G_{\rev}}K$ is symmetric.
\end{proposition}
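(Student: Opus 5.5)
The plan is to deduce the statement directly from the preceding theorem $KM_{G_{\rev}}=M_G^TK$ together with the symmetry of $K$, established in the proposition showing that $K=A_{\mathcal{H}}^TB_{{\mathcal{H}}_{\rev}}^{-1}$ is symmetric and independent of $\mathcal{H}$. No new combinatorics should be needed.

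First, since $G$ is reversible we have $G=G_{\rev}$ as labeled graphs, so $M_G=M_{G_{\rev}}$ by definition of the matrix. The equality $KM_G=M_{G_{\rev}}K$ in the statement therefore amounts to $KM_G=M_GK$, meaning $K$ commutes with $M_G$.

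Next, I will apply the theorem $KM_{G_{\rev}}=M_G^TK$ twice. Applied to $G$ it gives $KM_{G_{\rev}}=M_G^TK$. Applied to $G_{\rev}$, and using $(G_{\rev})_{\rev}=G$, it gives $KM_G=M_{G_{\rev}}^TK$. For reversible $G$ both identities read $KM_G=M_G^TK$. To show that $KM_G$ is symmetric, compute
\[(KM_G)^T=M_G^TK^T=M_G^TK=KM_{G_{\rev}}=KM_G,\]
using $K^T=K$. The remaining equality $KM_G=M_{G_{\rev}}K$ then follows from
\[M_{G_{\rev}}K=M_GK=(K^TM_G^T)^T=(KM_G^T)^T.\]
Since $M_G=M_{G_{\rev}}$, this equals $(KM_{G_{\rev}}^T)^T$. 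By the second instance of the theorem, $(M_G^TK)^T=(KM_G)^T$, which is $KM_G$ by the symmetry just shown. Alternatively, $M_GK=(K^TM_G^T)^T=(KM_G^T)^T$ and $KM_G^T=(M_GK)^T$, so it is cleaner to argue as follows: transposing $KM_G=M_G^TK$ gives $M_G^TK=KM_G$ on one side, so both products coincide with the symmetric matrix $KM_G$.

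The only point needing care is that all products of these infinite matrices are well defined and that transposition reverses the order of products. Transposition reversing order is automatic entrywise. Well-definedness holds because $K$ has finitely many nonzero entries in each relevant sum: by Proposition~\ref{prop:ftmatrixzeroes} the columns of $M_G$ are finitely supported. Also $M_G$ has $(M_G)_{i,j}=0$ for $i\ge n+j$, and entries of $K$ vanish unless the sums involved are finite, as in the expression $K=A^T_{\mathcal{H}}B^{-1}_{\mathcal{H}}$ with $B^{-1}_{\mathcal{H}}$ upper triangular. I expect this bookkeeping to be the main, though minor, obstacle. Since the theorem $KM_{G_{\rev}}=M_G^TK$ was already established as an identity of well-defined products, I will simply cite it and note that transposing an identity of products is valid entrywise.
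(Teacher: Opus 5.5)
Your computation $(KM_G)^T=M_G^TK^T=M_G^TK=KM_{G_{\rev}}=KM_G$ is correct. It is exactly the paper's argument, the one-line chain $KM_G=KM_{G_{\rev}}=M_G^TK=(KM_G)^T$.

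The gap is in the second half, where you read the displayed equality literally as the commutation $KM_G=M_GK$. None of your manipulations ever produces $M_GK$; each one re-derives $KM_G=M_G^TK$. Your final ``cleaner'' step silently identifies $M_GK$ with $M_G^TK$, since transposing $KM_G=M_G^TK$ just returns the same identity. The commutation is in fact false, and not even well-posed. Only the columns of $M_G$ are finitely supported (Proposition~\ref{prop:ftmatrixzeroes}); the rows of $M_G$ and the columns of $K$ need not be. For $G=P_2$, the $(1,1)$ entry of $M_{P_2}K$ would be the infinite sum $\sum_{k\geq 1}k(t+k)e_k^2$, whereas $(KM_{P_2})_{1,1}=(t+1)e_{11}-2te_2=XB_{P_2}(\bm x;t)$.

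Your well-definedness paragraph hides this problem. The sums $\sum_k K_{i,k}(M_G)_{k,j}$ are finite because the columns of $M_G$ are finitely supported, not because of any sparsity of $K$: its first row $ke_k$ is entirely nonzero. That reasoning gives nothing for $M_GK$.

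So the equation in the statement cannot be meant with $K$ on the right. What the paper proves, and what is true, is that $KM_G=KM_{G_{\rev}}$, which equals $M_G^TK$ by the theorem, and that this matrix is symmetric. With that reading, your first computation is already a complete proof. Drop the attempt to show that $K$ commutes with $M_G$, since it asserts something false.
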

\begin{proof}
    If $G$ is reversible then $M_G=M_{G_{\rev}}$, and since $K=K^T$, we have 
    \[KM_G = KM_{G_{\rev}}=(M_G)^TK=(KM_G)^T.\]
\end{proof}
\begin{example}
    Since $P_2$ is reversible, setting $t=-1$ for brevity we have
    \[KM_{P_2}=\begin{bmatrix}
e_1 & 2e_2 & 3e_3 & 4e_4 & \cdots \\
2e_2 & e_{21}+3e_3 & 2e_{31}+4e_4 & 3e_{41}+5e_5 & \cdots \\
3e_3 & 2e_{31}+4e_4 & e_{32}+3e_{41}+5e_5 & 2e_{42}+4e_{51}+6e_6 & \cdots \\
4e_4 & 3e_{41}+5e_5 & 2e_{42}+4e_{51}+6e_6 & e_{43}+3e_{52}+5e_{61}+7e_7 & \cdots \\
\vdots & \vdots & \vdots & \vdots & \ddots
\end{bmatrix} = (KM_{P_2})^T.\]
\end{example}

\printbibliography

\end{document}